\pgfplotsset{compat=1.18} 
\newcommand{\mG}{\mathsf{G}} 
\newcommand{\G}{\mathcal{G}}
\newtheorem{algo}{Algorithm}
\journalname{Fract. Calc. Appl. Anal.} %
\begin{document}

\title{A \(\star\)-Product Approach for Analytical and Numerical Solutions of Nonautonomous Linear Fractional Differential Equations}

\titlerunning{A \(\star\)-Product Approach for Nonautonomous Linear FDEs}
\author{
        Fabio Durastante$^1$ %
\and
        Pierre-Louis Giscard$^2$ %
\and
        Stefano Pozza$^3$ %
 }

\authorrunning{F. Durastante \and P.L. Giscard \and S. Pozza} %

\institute{Fabio Durastante$^{1}$
\at
Dipartimento di Matematica, Università di Pisa, Largo Bruno Pontecorvo, 5 56127 Pisa (PI), Italy.\\
\email{fabio.durastante@unipi.it}\\
ORCID ID: 0000-0002-1412-8289.
 \and
Pierre-Louis Giscard$^{2}$
\at
Laboratoire de Mathématiques Pures et Appliquées Joseph Liouville (LMPA)
Université Littoral Côte d'Opale, Calais, France. \\
\email{giscard@univ-littoral.fr}\\
ORCID ID: 0000-0003-3025-8750
\and
Stefano Pozza$^{3,*}$
\at
Department of Numerical Mathematics, Faculty of Mathematics and Physics, Charles University, Sokolovská 83, 186 75 Praha 8, Czech Republic.\\
\email{pozza@karlin.mff.cuni.cz}$^*$ corresponding author\\ %
ORCID ID: 0000-0003-1529-8420
}

\date{Received: \today / Revised: \ldots / Accepted: \ldots}

\maketitle

\begin{abstract}
This article presents a novel solution method for nonautonomous linear ordinary fractional differential equations. The approach is based on reformulating the analytical solution using the $\star$-product, a generalization of the Volterra convolution, followed by an appropriate discretization of the resulting expression. Additionally, we demonstrate that, in certain cases, the $\star$-formalism enables the derivation of closed-form solutions, further highlighting the utility of this framework.
\keywords{Fractional calculus (primary) \and fractional ordinary and partial differential equations \and $\star$-product}

\subclass{26A33 (primary) \and  65L05 \and 33C45}

\end{abstract} %

\section{Introduction} \label{sec:1}

\setcounter{section}{1} \setcounter{equation}{0} %

We consider systems of Fractional Ordinary Differential Equations (FODE) of the form  
\begin{equation}\label{eq:fode}
    y^{(\alpha)}(t) = \widetilde{f}(t)y(t), \quad y(0) = y_0, \quad t \in I = [0,T] \subseteq \mathbb{R}, \quad 0 < \alpha \leq 1,
\end{equation}
where {$\tilde{f}(t)$ is a regular function of $t$,} $y^{(\alpha)}(t)$ denotes the Caputo fractional derivative of order $\alpha$~\cite{CaputoDerivative}, defined~as  
\[
{y^{(\alpha)}(t) = \frac{1}{\Gamma(n-\alpha)} \int_{0}^{t} (t-\tau)^{n-\alpha-1} y^{(n)}(\tau)\,\mathrm{d}\tau, \quad n-1 < \alpha \leq n, \; n \in \mathbb{N}}
\]
with $\Gamma(\cdot)$ representing the Euler Gamma function~\cite[\S 5]{NIST:DLMF} , given by its integral definition  
\[
\Gamma(z) = \int_0^\infty t^{z-1} e^{-t} \, dt, \quad \text{for } \Re(z) > 0,
\]
which extends meromorphically to the entire complex plane except at the negative integers.

Equations of this type arise in a wide range of applications, including the modeling of viscoelastic materials~\cite{BagleyTorvik}, suspension flows via hyperbolic models~\cite{MR2386503}, and problems in Earth system dynamics~\cite{MR3671993}. They also appear in reformulations of quantum mechanics using fractional calculus. In particular, the fractional Schr\"{o}dinger equation was introduced in~\cite{PhysRevE.66.056108} and further analyzed in~\cite{PhysRevE.62.3135}. An extension of the Feynman path integral that incorporates L\'{e}vy flight paths was developed in~\cite{MR1755089}, while a time-fractional formulation that generalizes conventional quantum mechanics was proposed in~\cite{MR3671992}. The unification of fractional mechanics with quantization procedures was explored in~\cite{MR2944107}, and the time-fractional Schr\"{o}dinger equation—where the standard first-order time derivative is replaced by a Caputo fractional derivative—was studied in~\cite{10.1063/1.1769611}, leading to solutions with intrinsic nonlocal temporal behavior. Complementing these theoretical developments, numerical methods for the fractional Schr\"{o}dinger equation were proposed in~\cite{MR3033699,MR3689930}, highlighting both the mathematical depth and practical relevance of these fractional formulations.

The usual starting point to derive a method for solving~\eqref{eq:fode} is through the reformulation of it into its 
equivalent nonlinear Volterra integral equation form
\begin{equation}\label{eq:volterra-solution}
\begin{split}
y(t) = &\; y_0 + \frac{1}{\Gamma(\alpha)} \int_{0}^{t} (t-\tau)^{\alpha-1} \widetilde{f}(\tau)y(\tau)\,\mathrm{d}\tau \\
= &\; y_0 + \prescript{\mathrm{RL}}{0}{I}_{t}^\alpha \widetilde{f}(t)y(t),
\end{split}
\end{equation}
where $\prescript{\mathrm{RL}}{a}{I}_{b}^\alpha \cdot $ is the Riemann--Liouville fractional integral on the $[a,b]$ interval; see, e.g., \cite{DiethelmBook,garrappa2018numerical}. Most of these methods are, in their original formulation, of low order which can typically be improved through extrapolation techniques, see, e.g., the discussion in~\cite{DIETHELM2006482}. In addition, the most used methods are of the finite difference type and return the solution as a set of evaluations on a given grid, consider, e.g., fractional predictor-corrector methods~\cite{MR1926466} and fractional linear multistep methods~\cite{MR3327641,garrappa2018numerical,MR804935}. All these methods can be framed in the formula ``\emph{first discretize and then solve}''. In fact, these generate a succession of approximate problems whose solution is a convergent succession to the solution. The other possible approach, taking hold in different contexts, is first to express a solution to the continuous problem and then to discretize the latter to obtain the desired approximation. That is, ``\emph{solve first and then discretize}''; consider, e.g., the use of operator-based Krylov methods for the solution of partial differential equations~\cite{MR3945243}, or the computation of continuous semigroups with error control~\cite{MR4379629}. 
{This case also encompasses the construction of exponential integrators based on the two-parameter Mittag--Leffler function~\cite{MR3350038}
\begin{equation}
\label{eq:mittag-leffler-definition}
E_{\alpha,\beta}(x) = \sum_{k=0}^{\infty} \frac{x^k}{\Gamma(\alpha k + \beta)},
\end{equation}
which, in the special case $\beta = 1$, reduces to the standard notation $E_{\alpha}(x)$.}

Our objective is to refer here to this second approach and to obtain an approximation of the solution of~\eqref{eq:fode} starting from its reformulation in terms of the so-called $\star$-product. 

The paper is organized as follows. In Section~\ref{sec:computingthesolution}, we provide a brief introduction to the $\star$-product formalism. We then explore its application in rewriting the solution of~\eqref{eq:fode}, first for scalar equations in Section~\ref{sec:solution_as_resolvents}, and subsequently for systems of equations in Section~\ref{sec:nonautonomous_systems}. In Section~\ref{sec:discretization}, we discuss the discretization procedure for the obtained formulation. Section~\ref{sec:numexamples} presents numerical experiments on benchmark problems to validate the approach. Finally, we conclude with a summary of findings and future research directions in Section~\ref{sec:conclusions}.

\section{Algebraic treatment of differential calculus with \texorpdfstring{$\star$-} ~products}\label{sec:computingthesolution}
\setcounter{section}{2} \setcounter{equation}{0} %

The main novelty driving the analytical and numerical progresses in the treatment of nonautonomous fractional differential equations is a product operation on bivariate distributions, denoted $\star$-product. We refer the reader to \cite{MR4191370,ryckebusch2023frechetlie} for a full exposition of this product, which we here only briefly present.

Let $I\subset \mathbb{R}$ be a compact and denote $\mathcal{C}^\infty(I^2)$ be the set of bivariate functions which are defined and smooth on an open neighborhood of $I^2$. The weak closure of this set, denoted $\overline{\mathcal{C}^\infty(I^2)}$, includes bivariate distribution-like objects such as $\delta(t-s)$, the Dirac delta distribution, its derivative $\delta'(t-s)$ and more generally all of its derivatives $\delta^{(i)}(t-s)$, $i\in\mathbb{N}$ as well as the Heaviside Theta function $\Theta(t-s)=1$ if $t\geq s$ and 0 otherwise. Contrary to the familiar Schwartz distributions, elements of $\overline{\mathcal{C}^\infty(I^2)}$ are not linear forms but endomorphisms of $\mathcal{C}^\infty(I^2)$ and the $\star$-product is their composition. To get an intuitive understanding of these objects, a good analogy is given by finite dimensional vector spaces: if smooth functions were column vectors, then Schwartz distributions such as $\delta(x)$ would be row vectors, endomorphisms of $\mathcal{C}^\infty(I^2)$ would be square matrices and the $\star$-product would be the matrix product. Of particular interest is the subset $\mathcal{D}\subset\overline{\mathcal{C}^\infty(I^2)}$ of endomorphisms of the form 
\[
d(t,s)=\widetilde{d}_{-1}(t,s)\Theta(t-s) + \sum_{i=0}^N \widetilde{d}_i(t,s)\delta^{(i)}(t-s),
\]
where all $\tilde{d}_i(t,s)\in\mathcal{C}^\infty(I^2)$, $i\geq -1$ are bivariate functions. We may succinctly state that $\mathcal{D}$ is the class of
all distributions which are linear combination of Heaviside theta functions and Dirac delta
derivatives with coefficients in $C^\infty(I^2)$. 
On $\mathcal{D}$ the $\star$-product is defined as
\begin{equation}\label{eq:starprod}
(d\star e)(t,s):=\int_{-\infty}^\infty d(t,\sigma)e(\sigma,s)d\sigma,\quad d,e\in\mathcal{D}.
\end{equation}
The unit for this product is $1_\star:=\delta(t-s)$. The set of $\star$-invertible elements of $\mathcal{D}$ is dense in $\mathcal{D}$ and forms a Fr\'echet-Lie group. The $\star$-product reduces to a convolution in the case where both $d(t,s)$ and $e(t,s)$ depend only on $t-s$ and otherwise differs from it. It also produces the Volterra composition, here denoted $\star_V$, \cite{volterralecons}: if $d(t,s):=\tilde{d}_{-1}(t,s)\Theta(t-s)$ and $e(t,s):=\tilde{e}_{-1}(t,s)\Theta(t-s)$, then $d\star e = \tilde{d}_{-1} \star_V \tilde{e}_{-1}$. Most of the properties of the $\star$-product are best understood as continuous analogs of those of the matrix product $(\mathsf{A}\mathsf{B})_{ij}=\sum_k \mathsf{A}_{ik} \mathsf{B}_{kj}$. In particular the $\star$-product is non-commutative but associative over $\mathcal{D}$.

In $\mathcal{D}$, $\star$-powers such as $\Theta^{\star n}$ and $(\delta')^{\star n}=\delta^{(n)}$ are well defined for $n\in\mathbb{Z}$~\cite{ryckebusch2023frechetlie} (Here and in what follows, we omit the $t,s$ arguments of the distributions and functions when they are clear from context). For instance, $\Theta^{\star-1}=\delta'$ and \begin{equation}\label{StarPowersTheta}
(\delta'^{\star -k})(t,s)=(\Theta^{\star k})(t,s)=\frac{(t-s)^{k-1}}{(k-1)!}\Theta(t-s),\quad k\in\mathbb{N}.
\end{equation}
Furthermore $\star$-multiplication by $\Theta$ produces integration \[(\Theta \star f)(t,s)=\int_{-\infty}^t f(\sigma,s)d\sigma,\] while the Cauchy formula for repeated integration of a function $\tilde{f}\in\mathcal{C}^\infty(I^2)$ is the statement $(\Theta^{\star k})\star \tilde{f}\Theta=\Theta\star(\Theta\star(\cdots \Theta\star(\Theta\star \tilde{f}\Theta)\cdots))$. These observations encourage us to define fractional integrals   from positive fractional $\star$-powers of~$\Theta$. Such fractional powers are accessible from the usual series definition over integer powers:
    \begin{lemma}\label{ThetaAlpha}
   Let $\alpha\in\mathbb{R}^+$. Then the endomorphism of $\overline{\mathcal{C}^\infty(I^2)}$, 
   \[
   \Theta^{\star \alpha}(t,s) := \sum_{k=0}^\infty\binom{\alpha}{k}\big(\Theta-\delta \big)^{\star k},
   \] 
    is well defined on every $I^2 \subset \mathbb{R}^2$ compact. Furthermore it evaluates to, $$\Theta^{\star \alpha}(t,s)  = \frac{(t-s)^{\alpha-1}}{\Gamma(\alpha)}\Theta(t-s).$$ This endomorphism is $\star$-invertible if and only if $\alpha\in\mathbb{N}$.
    \end{lemma}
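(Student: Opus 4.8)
The plan is to establish the three assertions in turn: (i) convergence of the defining series on every compact $I^2$, (ii) evaluation of the sum to $(t-s)^{\alpha-1}\Theta(t-s)/\Gamma(\alpha)$, and (iii) the invertibility dichotomy. For (i), I would first observe that $(\Theta-\delta)^{\star k}$ can be expanded via the binomial theorem for the (associative) $\star$-product, using that $\delta = 1_\star$ commutes with everything, so $(\Theta-\delta)^{\star k} = \sum_{j=0}^k \binom{k}{j}(-1)^{k-j}\Theta^{\star j}$. Invoking the explicit formula \eqref{StarPowersTheta} for positive integer $\star$-powers of $\Theta$, each term is a function supported on $t\geq s$ of the form $(t-s)^{j-1}/(j-1)!$ (with the $j=0$ term contributing the $\delta$ piece), so the partial sums of $\Theta^{\star\alpha}$ are genuine functions times $\Theta(t-s)$ plus a multiple of $\delta$. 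On a compact $I^2$ one has $|t-s|\leq \mathrm{diam}(I)=:L$, and the resulting double series in $k$ and $j$ is dominated by $\sum_k |\binom{\alpha}{k}| \sum_j \binom{k}{j} L^{j-1}/(j-1)! \leq \sum_k |\binom{\alpha}{k}| e^{L}\cdot(\text{poly in }k)$; since $|\binom{\alpha}{k}| = O(k^{-\alpha-1})$ as $k\to\infty$ this converges absolutely and uniformly on $I^2$, which legitimizes all rearrangements.

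For (ii), the cleanest route is to recognize the series as the binomial/Newton series for the $\alpha$-th power in the Banach (or Fréchet) algebra generated by $\Theta$ and $1_\star$: writing $x := \Theta - 1_\star$, we have $\Theta^{\star\alpha} = (1_\star + x)^{\star\alpha} = \sum_k \binom{\alpha}{k} x^{\star k}$, and I want to identify this with the distribution $R_\alpha(t,s):=(t-s)^{\alpha-1}\Theta(t-s)/\Gamma(\alpha)$. Two sub-approaches are available. The first is to verify directly the two characterizing properties: $R_1 = \Theta$, and $R_\alpha \star R_\beta = R_{\alpha+\beta}$ for all $\alpha,\beta>0$ — the latter is exactly the Beta-integral identity $\int_s^t (t-\sigma)^{\alpha-1}(\sigma-s)^{\beta-1}\,d\sigma = B(\alpha,\beta)(t-s)^{\alpha+\beta-1}$, i.e. the semigroup property of the Riemann–Liouville fractional integral — together with an analyticity/uniqueness argument in $\alpha$ to conclude that the series, which also satisfies $F(\alpha)\star F(\beta) = F(\alpha+\beta)$ and $F(1)=\Theta$, must coincide with $R_\alpha$ (both are analytic $\mathcal{D}$-valued functions of $\alpha$ agreeing on $\mathbb{N}$ by \eqref{StarPowersTheta}, and a one-parameter $\star$-group through $\Theta$ is determined by its generator). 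The second, more computational sub-approach is to substitute the binomial expansion of $x^{\star k}$ from step (i), swap the order of summation to collect the coefficient of $(t-s)^{n-1}/(n-1)!$ for each $n\geq 1$, and recognize the inner sum $\sum_{k\geq n}\binom{\alpha}{k}\binom{k}{n}(-1)^{k-n}$ as a hypergeometric sum that telescopes to $\binom{\alpha-1}{n-1} = \Gamma(\alpha)^{-1}\Gamma(\alpha)/(\Gamma(n)\Gamma(\alpha-n+1))$, whence the sum over $n$ reassembles the Taylor series of $(t-s)^{\alpha-1}/\Gamma(\alpha)$; the $\delta$-coefficient (the $n=0$ bookkeeping) is checked to vanish for $\alpha\notin\mathbb{N}$. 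I expect this reindexing-and-hypergeometric-identity step to be the main obstacle, since it requires care with the convergence justification from (i) and with the combinatorial identity; the semigroup route sidesteps most of it but needs the uniqueness lemma for one-parameter $\star$-groups, which may or may not be available from the cited references.

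For (iii), invertibility in $\mathcal{D}$ under $\star$ is governed (as in the matrix analogy) by the coefficient of $\delta$ — an element $\sum_i \tilde d_i \delta^{(i)} + \tilde d_{-1}\Theta$ is $\star$-invertible iff its $\delta^{(0)}$-coefficient $\tilde d_0(t,s)$ is nowhere zero (this is the statement that the "diagonal part" is a unit); I would cite this from the earlier discussion of the Fréchet–Lie group structure of $\star$-invertible elements. From the explicit formula, $\Theta^{\star\alpha} = R_\alpha(t,s)$ has no $\delta$-component at all when $\alpha\notin\mathbb{N}$ (it is a pure $\Theta$-type distribution whose kernel $(t-s)^{\alpha-1}/\Gamma(\alpha)$ is locally integrable but not smooth through the diagonal), so it is not invertible; whereas for $\alpha = m\in\mathbb{N}$, formula \eqref{StarPowersTheta} gives $\Theta^{\star m}$ with $\star$-inverse $\delta^{(m)} = (\delta')^{\star m}$, exhibiting invertibility explicitly. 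I would close by noting the $\alpha\notin\mathbb{N}$ direction also follows abstractly: if $\Theta^{\star\alpha}$ were invertible then so would be $\Theta = (\Theta^{\star\alpha})^{\star 1/\alpha}\cdots$ — more cleanly, $\Theta^{\star\alpha}\star\Theta^{\star(1-\alpha)} = \Theta$, and since $\Theta = \Theta^{\star 1}$ is itself not invertible (its inverse $\delta'$ notwithstanding — actually $\Theta$ \emph{is} invertible with inverse $\delta'$, so this abstract shortcut fails and one must rely on the explicit-kernel argument above, i.e. absence of a $\delta$-term).
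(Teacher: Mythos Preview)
Your binomial-expansion setup for (i)--(ii) matches the paper's, but two of the concrete steps break. The absolute-value majorant in (i) is wrong: $\sum_{j=1}^k\binom{k}{j}L^{j-1}/(j-1)!$ is not bounded by $e^L$ times a polynomial in $k$ --- its generating function in $x$ is $\frac{x}{(1-x)^2}\exp\bigl(\tfrac{Lx}{1-x}\bigr)$, with an essential singularity at $x=1$, forcing coefficient growth like $e^{c\sqrt{Lk}}$, which swamps the $O(k^{-\alpha-1})$ decay of $|\binom{\alpha}{k}|$. The series converges only thanks to the signs $(-1)^{k-j}$: the signed inner sum is essentially an associated Laguerre polynomial in $t-s$, bounded polynomially in $k$ for fixed argument. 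More seriously, your second route in (ii) cannot work: for non-integer $\alpha$ the target $(t-s)^{\alpha-1}/\Gamma(\alpha)$ has \emph{no} Taylor expansion at $t=s$, so there is no power series to reassemble; correspondingly, after your swap the inner sum $\sum_{k\geq n}\binom{\alpha}{k}\binom{k}{n}(-1)^{k-n} = \binom{\alpha}{n}\sum_{j\geq 0}\binom{\alpha-n}{j}(-1)^j$ diverges for every $n>\alpha$. The paper avoids both issues by keeping the $k$-sum outermost, recognizing the finite inner sum as $(-1)^{k+1}k\,{}_1F_1(1-k;2;t-s)$, and then summing that single series in $k$. Your semigroup-plus-analyticity alternative is a genuinely different and interesting idea, but ``agree on $\mathbb{N}$ and analytic'' does not force uniqueness since $\mathbb{N}$ has no accumulation point; you would need a Carlson-type theorem.

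For (iii), your stated invertibility criterion is false: $\delta'$ has zero $\delta^{(0)}$-coefficient yet is $\star$-invertible with inverse $\Theta$. Moreover, for $0<\alpha<1$ the kernel $(t-s)^{\alpha-1}/\Gamma(\alpha)$ is not smooth on $I^2$, so $\Theta^{\star\alpha}\notin\mathcal{D}$ and a criterion for elements of $\mathcal{D}$ would not apply anyway. The paper instead argues non-invertibility by exhibiting a nonzero $X_\alpha$ with $\Theta^{\star\alpha}\star X_\alpha = 0$, given in the Remark immediately following the proof.
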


    \begin{proof}
    Let $\alpha>0$, then consider the formal series definition of non-integer $\star$-powers of $\Theta$ from its integer powers,
        \begin{equation}\label{SeriesAlpha}
		\Theta^{\star \alpha} =\sum_{k=0}^\infty\binom{\alpha}{k}\big(\Theta-\delta \big)^{\star k}.
    \end{equation}
     Since $\delta$ is the $\star$-identity, $\Theta$ and $\delta$ $\star$-commute, $\delta^{\star (k-m)}=\delta$  and $\Theta^{\star 0}=\delta$, it holds that
   \begin{align*}
		\Theta^{\star \alpha}(t,s)&=\sum_{k=0}^\infty\binom{\alpha}{k}\sum_{m=0}^k\binom{k}{m} (-1)^{k-m}\,\Theta^{\star m}\\&=\sum_{k=0}^\infty\sum_{m=1}^k\binom{\alpha}{k}\binom{k}{m}(-1)^{k-m}\frac{(t-s)^{m-1}}{(m-1)!}\Theta(t-s)\\&\hspace{10mm}+\sum_{k=0}^\infty\binom{\alpha}{k}(-1)^k \delta(t-s).  
  \end{align*}
	Note that the assumption $\alpha>0$ is necessary to guarantee existence of the $\delta$ term. Evaluating the above expression gives, 
  \begin{align*}
		\Theta^{\star \alpha}(t,s)&=\sum _{k=0}^{\infty } (-1)^{k+1} k \binom{\alpha }{k} \, _1F_1(1-k;2;t-s)\Theta(t-s),\\&=\frac{(t-s)^{\alpha-1}}{\Gamma(\alpha)}\Theta(t-s).
	\end{align*}
Here $_1F_1(a,b,z)$ is a confluent hypergeometric function. Generalized hypergeometric functions~\cite[\S 16.2]{NIST:DLMF} \({}_pF_q\) are defined as the series:
\begin{equation}\label{pFq}
{}_pF_q(a_1, \dots, a_p; b_1, \dots, b_q; z) = \sum_{k=0}^{\infty} \frac{(a_1)_k (a_2)_k \cdots (a_p)_k}{(b_1)_k (b_2)_k \cdots (b_q)_k} \frac{z^k}{k!},
\end{equation}
where \((a)_k:=a (a+1) \cdots (a+k-1) = \Gamma(a+k)/\Gamma(a)\) is the Pochhammer symbol.\\

    We now turn to the $\star$-invertibility of $\Theta^{\star \alpha}$.
Should it be invertible, then its $\star$-inverse ought be the $\alpha$th fractional $\star$-power of $\delta'$, $(\delta')^{\star \alpha}$ since this is true for all integers $\alpha$, see Eq.~\eqref{StarPowersTheta}. Attempting to define $(\delta')^{\star \alpha}$ through a series in integer powers of $\delta'$ similarly to Eq.~(\ref{SeriesAlpha}) should lead to explicit forms for fractional derivatives of bivariate functions. The series does not converge for any $\alpha\in\mathbb{R}^+\backslash\mathbb{N}$ however, just as that of Eq.~\eqref{SeriesAlpha} does not converge for $\alpha<0$. Both of these facts indicate that $\Theta^{\star \alpha}$ is not $\star$-invertible for non-integer positive $\alpha$ values. In the following we will explicitly identify an element of its kernel. 
    \end{proof}
    
While the endomorphism $\Theta^{\star \alpha}$ is not $\star$-invertible hindering the definition of fractional derivatives, it nonetheless admits a pseudo-inverse for the $\star$-product from which stem both the Riemann--Liouville and Caputo fractional derivatives:
    \begin{lemma}\label{ThetaInverse}
   Let $a\in\mathbb{R}$, $\alpha\in\mathbb{R}^+$ and consider the operators $D_{\alpha}$ and $I$ defined through their action on any $f\in\mathcal{D}$ as
\begin{align}
\big(I \star f\big)(t,s)&:= \lim_{a\to-1} \int_{-\infty}^\infty \frac{(t-\sigma)^{a}}{\Gamma(1+a)}\Theta(t-\sigma) f(\sigma,s) d\sigma,\\
\big(D_\alpha \star f\big)(t,s) &:= \lim_{a\to-1} \int_{-\infty}^\infty \frac{(t-\sigma )^{a-\alpha }}{\Gamma (a-\alpha +1)} \Theta(t-\sigma) f(\sigma,s)d\sigma.\label{Daf}
\end{align}
Then,
\begin{align*}
&D_\alpha\star \Theta^{\star \alpha}=\Theta^{\star \alpha}\star D_\alpha = I,\\
&D_\alpha\star \Theta^{\star\alpha+\beta}=\Theta^{\star \beta},\quad \forall \beta\in\mathbb{R}^+,\\
&I\star \Theta^{\star \alpha}= \Theta^{\star \alpha}\star I=\Theta^{\star \alpha}.
\end{align*}
That is, $D_\alpha$ is the Drazin inverse \cite{KingDrazin} of $\Theta^{\star\alpha}$ with respect to the $\star$-product.
\end{lemma}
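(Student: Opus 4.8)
The plan is to reduce everything — the three identities and the Drazin characterisation — to one computational fact: the convolution semigroup law for $\star$-powers of $\Theta$. First I would record, as a consequence of Lemma~\ref{ThetaAlpha} and Euler's Beta integral $\int_s^t(t-\sigma)^{\mu-1}(\sigma-s)^{\nu-1}\,\mathrm{d}\sigma=(t-s)^{\mu+\nu-1}\Gamma(\mu)\Gamma(\nu)/\Gamma(\mu+\nu)$, that
\[
\big(\Theta^{\star\mu}\star\Theta^{\star\nu}\big)(t,s)=\frac{(t-s)^{\mu+\nu-1}}{\Gamma(\mu+\nu)}\Theta(t-s)=\Theta^{\star(\mu+\nu)}(t,s),\qquad \mu,\nu>0.
\]
Since for bivariate functions depending only on $t-s$ the $\star$-product coincides with ordinary convolution, both sides are analytic in $(\mu,\nu)$ and the identity persists by analytic continuation to all real exponents, $\Theta^{\star\lambda}$ with $\lambda\le 0$ being the regularised homogeneous distribution $(t-s)_+^{\lambda-1}/\Gamma(\lambda)$ (equal to $\delta=1_\star$ at $\lambda=0$ and to $\delta^{(k)}$ at $\lambda=-k$). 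I would also observe, straight from the definitions, that $(I\star f)(t,s)=\lim_{a\to-1}\big(\Theta^{\star(a+1)}\star f\big)(t,s)$ and $(D_\alpha\star f)(t,s)=\lim_{a\to-1}\big(\Theta^{\star(a+1-\alpha)}\star f\big)(t,s)$; that is, $I$ and $D_\alpha$ are the limits of the $\star$-powers $\Theta^{\star(a+1)}$ and $\Theta^{\star(a+1-\alpha)}$.

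Granting this, each identity is one application of the semigroup law followed by the limit $a\to-1$. For the third identity, $\Theta^{\star(a+1)}\star\Theta^{\star\alpha}=\Theta^{\star(a+1+\alpha)}$, whose kernel $(t-s)^{a+\alpha}\Theta(t-s)/\Gamma(a+1+\alpha)$ tends to $(t-s)^{\alpha-1}\Theta(t-s)/\Gamma(\alpha)=\Theta^{\star\alpha}$ as $a\to-1$; here the defining integral already converges for $a>-1$, so no regularisation is needed, and commutativity of convolution delivers $\Theta^{\star\alpha}\star I=\Theta^{\star\alpha}$ for free. For the second identity, $\Theta^{\star(a+1-\alpha)}\star\Theta^{\star(\alpha+\beta)}=\Theta^{\star(a+1+\beta)}$, with kernel $(t-s)^{a+\beta}\Theta(t-s)/\Gamma(a+1+\beta)\to(t-s)^{\beta-1}\Theta(t-s)/\Gamma(\beta)=\Theta^{\star\beta}$, the limit being harmless because $\beta>0$. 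For the first identity, $\Theta^{\star(a+1-\alpha)}\star\Theta^{\star\alpha}=\Theta^{\star(a+1)}$, whose kernel $(t-s)^{a}\Theta(t-s)/\Gamma(a+1)$ is precisely the kernel defining $I$; taking $\lim_{a\to-1}$ on both sides gives $D_\alpha\star\Theta^{\star\alpha}=I$, and the same computation with the two factors interchanged (again by commutativity of convolution) gives $\Theta^{\star\alpha}\star D_\alpha=I$.

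For the closing assertion I would verify the defining relations of the Drazin inverse of $A:=\Theta^{\star\alpha}$ with $X:=D_\alpha$: $(i)$ $X\star A\star X=I\star X=\lim_{a\to-1}\big(\Theta^{\star(a+1)}\star D_\alpha\big)=\lim_{a\to-1}\Theta^{\star(a+1-\alpha)}=X$, by the first identity and the semigroup law; $(ii)$ $A\star X=X\star A$, which is exactly the first identity; $(iii)$ $A^{\star 2}\star X=\Theta^{\star 2\alpha}\star D_\alpha=D_\alpha\star\Theta^{\star(\alpha+\alpha)}=\Theta^{\star\alpha}=A$, which is the second identity with $\beta=\alpha$ together with commutativity. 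By uniqueness of the Drazin inverse this identifies $D_\alpha$ with $A^{D}$; for $\alpha\in\mathbb{N}$ it reduces to the ordinary $\star$-inverse, whereas for $\alpha\notin\mathbb{N}$ the failure of $\star$-invertibility established in Lemma~\ref{ThetaAlpha} shows that $D_\alpha$ is a genuine pseudo-inverse, of index one.

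\textbf{The main obstacle.} The real content sits in the first two identities: the integral $\int_s^t(t-\sigma)^{a-\alpha}(\sigma-s)^{\alpha-1}\,\mathrm{d}\sigma$ diverges at $\sigma=t$ exactly when $a$ is near $-1$, since then $a-\alpha<-1$. One must therefore evaluate it in the range $a>\alpha-1$, where it is a bona fide Beta integral, analytically continue the resulting closed form in $a$ down to $a=-1$ (equivalently, interpret the integrand against its Hadamard finite part), and then justify that the limit $a\to-1$ commutes with $\star$-multiplication by the remaining fixed factor. Making this interchange rigorous — together with the extension of the semigroup law to non-positive exponents, i.e.\ to the regularised homogeneous distributions — is the delicate step; everything else is bookkeeping with the functional equation of $\Gamma$. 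A secondary point to phrase with care is that, although $I$ coincides as a bivariate distribution with $\lim_{a\to-1}\Theta^{\star(a+1)}$, it is not $\star$-invertible and does not act as $1_\star$ on all of $\mathcal{D}$ in the $\star$-algebraic sense; this is what prevents $D_\alpha$ from being a two-sided inverse and is consistent with the kernel element of $\Theta^{\star\alpha}$ promised after Lemma~\ref{ThetaAlpha}.
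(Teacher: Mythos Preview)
Your proposal is correct and is essentially a careful fleshing-out of what the paper itself offers: the paper does not give a proof but only remarks that ``all the results above follow from direct calculations of the integrals'' and points to the Gel'fand--Shilov framework of regularised homogeneous distributions. Your route via the Beta integral semigroup law $\Theta^{\star\mu}\star\Theta^{\star\nu}=\Theta^{\star(\mu+\nu)}$, analytic continuation in the exponent, and the identification of $I$ and $D_\alpha$ as the $a\to-1$ limits of $\Theta^{\star(a+1)}$ and $\Theta^{\star(a+1-\alpha)}$ is precisely that direct calculation made explicit, and your explicit verification of the Drazin axioms (which the paper does not spell out) is a welcome addition.
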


\begin{remark}
 Consider $\alpha\in\mathbb{R}^+\backslash\mathbb{N}$ and define
$$
X_\alpha(t,s):=\frac{(t-s)^{-\alpha }}{\Gamma
   (1-\alpha )}\Theta(t-s)-\Gamma (\alpha ) (t-s)^{1-\alpha }I(t,s),
$$
Then we have
$
\Theta^{\star \alpha}\star X_\alpha =0.
$
That is $\text{Im}(X_\alpha)\subset \text{Ker}(\Theta^{\star \alpha})$ for $\alpha\in\mathbb{R}^+\backslash\mathbb{N}$. The projector onto the kernel of $\Theta^{\star \alpha}$ is then $P:=\delta - D_\alpha\star \Theta^{\star \alpha}$. All the results above follow from direct calculations of the integrals. They offer an alternative point of view on the traditional approach developed by Gel'fand and Shilov~\cite{Gelfand1967} (and see also \cite{Podlubny1999}).%
    \end{remark}

Considering the objects $f\in\overline{\mathcal{C}^\infty(I^2)}$ for which \textit{the limit and integral of~\eqref{Daf} can be exchanged}, the Drazin inverse $D_\alpha=(\Theta^{\star \alpha})^\dagger$ is seen to act, by $\star$-product,  the same way as 
\begin{equation}\label{effectiveinverse}
\Theta^{\star-\alpha}:=\frac{(t-s)^{-1-\alpha}}{\Gamma(-\alpha)} \Theta(t-s).
\end{equation}
The $\star$-action of this is precisely what yields Riemann--Liouville integral with negative fractional order: let $\tilde{y}\in\mathcal{C}^\infty(I^2)$, then  
\begin{equation}\label{RiemannDraz}
\big(\Theta^{\star-\alpha}\star \tilde{y}\Theta\big)(t,s)=\,^\text{RL}\!_sI_t^{-\alpha}[\tilde{y}]\,\Theta(t-s)=:\,^\text{RL}\!_s\mathrm{D}_t^{\alpha}[\tilde{y}]\,\Theta(t-s),
\end{equation}
with $\,^\text{RL}\!_s\mathrm{D}_t^{\alpha}[\tilde{y}]$ the Riemann--Liouville derivative of $\tilde{y}(t)$ of order $\alpha$. 
This provides an algebraic explanation for the change $\alpha\to -\alpha$ postulated in the Riemann--Liouville approach to fractional derivatives from fractional integrals. However, this strategy is not algebraically valid \textit{in general}, since the $\star$-action of $\Theta^{\star-\alpha}$ does not always coincide with that of $D_\alpha$.
\begin{remark}
For $\alpha>0$, $\Theta^{\star-\alpha}$ is not a genuine $\star$-power of $\Theta$ since $\Theta$ has no $\star$-inverse when $\alpha>0$ is not an integer. Rather it should  be understood as a convenient abuse of notation to represent the action of the Drazin inverse~$D_\alpha$. In the following we will also use the similar notation $\Theta^{\star(1-\alpha)}:=(t-s)^{-\alpha}/\Gamma(1-\alpha)\,\Theta(t-s)$, which is not a $\star$-power of $\Theta$ whenever $\alpha>1$. 
\end{remark}

Let %
{$\tilde{y}^{(\alpha)}(t)$}
denote the Caputo derivative of order {$0 < \alpha < 1$} with respect to $t$ of the smooth function $\tilde{y}(t)$. This quantity also has an immediate algebraic meaning. Indeed, since $\Theta^{\star \alpha} = \Theta\star \Theta^{\star \alpha-1}$ and $\delta'=\Theta^{\star -1}$, it follows that $D_\alpha = D_{\alpha-1}\star \delta'$. Thus by Eq.~\eqref{RiemannDraz},
\begin{align*}
\prescript{\text{RL}}{s}{\mathrm{D}}_t^{\alpha}[\tilde{y}]\,\Theta = D_\alpha\star \tilde{y}\Theta =\Theta^{\star(1-\alpha)} \star \delta' \star \tilde{y}\Theta.
\end{align*}
Since $\delta'\star \tilde{y}\Theta = \tilde{y}'\Theta + \tilde{y} \delta$ (see, e.g., \cite{MR4191370}) then,
\begin{align*}
\prescript{\text{RL}}{s}{\mathrm{D}}_t^{\alpha}[\tilde{y}]\,\Theta&=\underbrace{\prescript{\text{RL}}{s}I_t^{1-\alpha}[\tilde{y}']}_{ %
{\tilde{y}^{(\alpha)}(t)}
}\,\Theta + \frac{(t-s)^{-\alpha}}{\Gamma(1-\alpha)}\,\tilde{y}(s) \Theta,
\end{align*}
from which we recover the Caputo derivative of $\tilde{y}$
\begin{equation}\label{CaputoDef}
{\tilde{y}^{(\alpha)}}(t)\Theta =\prescript{\text{RL}}{s}{\mathrm{D}}_t^{\alpha}[\tilde{y}]\,\Theta-\frac{(t-s)^{-\alpha}}{\Gamma(1-\alpha)}\,\tilde{y}(s) \Theta.
\end{equation}
Alternative presentations of the Caputo derivative that rely on $\tilde{y}^{(k)}$ instead of $\tilde{y}'$ are similarly obtained via $D_\alpha = D_{\alpha-k}\star \delta^{(k)}$.
    
\section{Solution to nonautonomous fractional differential equations as \texorpdfstring{$\star$-} ~resolvents}\label{sec:solution_as_resolvents}
\setcounter{section}{3} \setcounter{equation}{0}

Within the framework presented in Section~\ref{sec:computingthesolution}, we obtain a new expression of the solution of the fractional differential equation \eqref{eq:fode} by reformulating it explicitly as an equation in two variables $t>s$, $s$ being the time for the initial condition:
\begin{equation}\label{frac2var}
{\tilde{y}^{(\alpha)}(t,s)}
\Theta(t-s)= \tilde{f}(t)\,\tilde{y}(t,s)\Theta(t-s),\quad \tilde{y}(s,s) = \tilde{y}_s.
\end{equation}
\begin{theorem}\label{thm:star-solution}
Consider the homogeneous linear fractional differential equation \eqref{frac2var} with $\tilde{f}(t)$ a continuous function of $t$ and $\alpha\in\mathbb{R}^+$. Then the solution exists and is given~by
\begin{equation}\label{eq:star-solution}
    	\tilde{y}(t,s) \Theta = \Theta^{\star \alpha} \star (\delta -\tilde{f}\Theta^{\star\alpha})^{\star -1}\,\star \Theta^{\star(1-\alpha)}\,\tilde{y}_s.
\end{equation}
\end{theorem}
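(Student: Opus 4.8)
The plan is to turn~\eqref{frac2var} into a \(\star\)-linear Volterra integral equation, solve that by inverting the relevant endomorphism inside the Fr\'echet--Lie group of \(\star\)-invertible elements, and then bring the result into the stated form by means of a resolvent ``push-through'' identity. For the reduction, read~\eqref{frac2var} through~\eqref{CaputoDef} in two variables, with the Caputo derivative acting on \(t\): since \(\delta'\star(\tilde{y}\Theta)=(\partial_t\tilde{y})\,\Theta+\tilde{y}_s\,\delta\) (the \(\delta\)-term coming from \(\partial_t\Theta(t-s)=\delta(t-s)\) and the initial condition) and \(D_\alpha=\Theta^{\star(1-\alpha)}\star\delta'\), identity~\eqref{CaputoDef} becomes the two-variable statement \(\prescript{\mathrm{C}}{}{\mathrm{D}_t^{\alpha}}[\tilde{y}]\,\Theta=D_\alpha\star(\tilde{y}\Theta)-\Theta^{\star(1-\alpha)}\tilde{y}_s\). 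Thus~\eqref{frac2var} is equivalent to \(D_\alpha\star(\tilde{y}\Theta)=\tilde{f}\,\tilde{y}\,\Theta+\Theta^{\star(1-\alpha)}\tilde{y}_s\), and \(\star\)-multiplying on the left by \(\Theta^{\star\alpha}\) and using \(\Theta^{\star\alpha}\star D_\alpha=I\) (Lemma~\ref{ThetaInverse}), \(\Theta^{\star\alpha}\star\Theta^{\star(1-\alpha)}=\Theta^{\star 1}=\Theta\) (Lemma~\ref{ThetaAlpha}), and \(\tilde{f}\,\tilde{y}\,\Theta=(\tilde{f}\delta)\star(\tilde{y}\Theta)\) --- where \(\tilde{f}\delta\) is the ``multiplication by \(\tilde{f}(t)\)'' endomorphism --- turns this into the \(\star\)-Volterra equation
\[
\big(\delta-\Theta^{\star\alpha}\star\tilde{f}\delta\big)\star(\tilde{y}\Theta)=\Theta\,\tilde{y}_s .
\]
The only delicate step is that \(I\) must act as the identity on \(\tilde{y}\Theta\): here the non-invertibility of \(\Theta^{\star\alpha}\) (Lemma~\ref{ThetaAlpha}) intervenes, and one uses that a Caputo solution lies in \(\operatorname{Im}\Theta^{\star\alpha}\), i.e.\ is annihilated by the kernel projector \(P=\delta-D_\alpha\star\Theta^{\star\alpha}\) of the Remark. (Alternatively one may simply invoke the classical equivalence of~\eqref{frac2var} with its Volterra form, of which the display above is the \(\star\)-transcription.)

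Next I would establish existence and uniqueness by inverting the operator above. Because \(\tilde{f}\) is continuous on the compact \(I\) and \(\Theta^{\star\alpha}(t,s)=(t-s)^{\alpha-1}\Theta(t-s)/\Gamma(\alpha)\) is a Volterra kernel --- bounded when \(\alpha\ge1\), weakly singular but integrable when \(0<\alpha<1\) --- the endomorphism \(\Theta^{\star\alpha}\star\tilde{f}\delta\) is \(\star\)-quasinilpotent on \(I^2\): its iterated \(\star\)-powers have kernels that become progressively less singular (bounded after finitely many iterations) with factorially decaying size, so the Neumann series \(\sum_{n\ge0}(\Theta^{\star\alpha}\star\tilde{f}\delta)^{\star n}\) converges in \(\overline{\mathcal{C}^\infty(I^2)}\) and \(\delta-\Theta^{\star\alpha}\star\tilde{f}\delta\) is \(\star\)-invertible. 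Consequently \(\tilde{y}\Theta=(\delta-\Theta^{\star\alpha}\star\tilde{f}\delta)^{\star-1}\star\Theta\,\tilde{y}_s\) is the unique solution, and it is continuous since \(\tilde{f}\) is. I expect the convergence of this \(\star\)-resolvent in the weakly singular regime \(0<\alpha<1\) to be the main obstacle; it should be handled with the \(\star\)-resolvent machinery of~\cite{MR4191370,ryckebusch2023frechetlie} together with the classical estimates for weakly singular Volterra kernels.

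Finally, I would write \(\Theta\,\tilde{y}_s=\Theta^{\star\alpha}\star\Theta^{\star(1-\alpha)}\tilde{y}_s\) and apply the push-through identity: with \(A=\Theta^{\star\alpha}\) and \(B=\tilde{f}\delta\), the relation \((\delta-A\star B)\star A=A\star(\delta-B\star A)\) is immediate from associativity, and since \(\delta-A\star B\) and \(\delta-B\star A=\delta-\tilde{f}\Theta^{\star\alpha}\) are both \(\star\)-invertible (the previous paragraph applies verbatim to \(B\star A\)), it gives \((\delta-A\star B)^{\star-1}\star A=A\star(\delta-B\star A)^{\star-1}\); hence
\[
\tilde{y}\Theta=\big(\delta-\Theta^{\star\alpha}\star\tilde{f}\delta\big)^{\star-1}\star\Theta^{\star\alpha}\star\Theta^{\star(1-\alpha)}\tilde{y}_s=\Theta^{\star\alpha}\star\big(\delta-\tilde{f}\Theta^{\star\alpha}\big)^{\star-1}\star\Theta^{\star(1-\alpha)}\tilde{y}_s,
\]
which is~\eqref{eq:star-solution}. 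Equivalently, and more quickly, one checks that \(g:=(\delta-\tilde{f}\Theta^{\star\alpha})^{\star-1}\star\Theta^{\star(1-\alpha)}\tilde{y}_s\) satisfies \(g=\Theta^{\star(1-\alpha)}\tilde{y}_s+\tilde{f}\Theta^{\star\alpha}\star g\), so that \(\Theta^{\star\alpha}\star g\) obeys the \(\star\)-Volterra equation above, hence~\eqref{frac2var}, and then invokes uniqueness. A secondary point to watch is that for \(\alpha>1\) the symbol \(\Theta^{\star(1-\alpha)}\) is no longer a genuine \(\star\)-power of \(\Theta\) but the Gel'fand--Shilov/Drazin-regularized pseudo-power of Lemma~\ref{ThetaInverse} and the remarks around it, so one must check that \(\Theta^{\star\alpha}\star\Theta^{\star(1-\alpha)}=\Theta\) and the push-through identity persist in that regularized sense.
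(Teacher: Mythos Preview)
Your argument is correct, but the paper takes a shorter and more direct route that avoids the push-through identity altogether. Instead of passing to the Volterra equation for $\tilde{y}\Theta$ and then rearranging, the paper introduces the auxiliary $G_\alpha:=D_\alpha\star(\tilde{y}\Theta)$ and observes that, because $\tilde{f}$ depends only on the \emph{left} variable $t$, pointwise multiplication by $\tilde{f}$ slides into the first factor of a $\star$-product: $\tilde{f}\cdot(\Theta^{\star\alpha}\star G_\alpha)=(\tilde{f}\,\Theta^{\star\alpha})\star G_\alpha$. The FDE then reads $(\delta-\tilde{f}\Theta^{\star\alpha})\star G_\alpha=\Theta^{\star(1-\alpha)}\tilde{y}_s$ directly, already with the resolvent in the stated form; one left-$\star$-multiplies by $\Theta^{\star\alpha}$ and uses $\Theta^{\star\alpha}\star D_\alpha=I$, $I\star(\tilde{y}\Theta)=\tilde{y}\Theta$ to conclude. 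Your route arrives at the ``wrong'' resolvent $(\delta-\Theta^{\star\alpha}\star\tilde{f}\delta)^{\star-1}$ first --- note that $\Theta^{\star\alpha}\star(\tilde{f}\delta)$ multiplies by $\tilde{f}(s)$, not $\tilde{f}(t)$ --- and must then repair this with push-through. On the plus side, you are more explicit than the paper about why the Neumann series converges (the paper defers this to a one-line remark), and you correctly flag the subtlety about $I$ acting as the identity on $\tilde{y}\Theta$; the paper simply asserts this for smooth $\tilde{y}$. Your closing ``equivalently, and more quickly'' verification via $g$ is essentially the paper's derivation read backwards.
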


\begin{corollary}\label{inhomogeneouscorollary}
    Let $\alpha\in\mathbb{R}^+$ and consider the inhomogeneous linear fractional differential equation 
    \begin{equation}\label{frac2varinho}
{\tilde{y}^{(\alpha)}(t,s)}
\Theta(t-s)= \tilde{f}(t)\,\tilde{y}(t,s)\Theta(t-s)+\tilde{g}(t)\Theta(t-s),\quad \tilde{y}(s,s) = \tilde{y}_s
\end{equation}
    with $\tilde{f}$ and $\tilde{g}$ two continuous functions of $t$. Then the solution exists and is given~by
\begin{equation}\label{eq:star-solution2}
    	\tilde{y}(t,s) \Theta = \Theta^{\star \alpha} \star (\delta -\tilde{f}\Theta^{\star\alpha})^{\star -1}\,\star (\Theta^{\star(1-\alpha)}\,\tilde{y}_s+\tilde{g}\Theta).
\end{equation}
\end{corollary}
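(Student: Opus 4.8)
The natural route is to re-run the argument behind Theorem~\ref{thm:star-solution}, carrying the extra source term $\tilde g\,\Theta$ through unchanged: the inhomogeneous case should be a bookkeeping extension of the homogeneous one. I would begin by transcribing \eqref{frac2varinho} into the $\star$-algebra of Section~\ref{sec:computingthesolution}. By \eqref{CaputoDef} and the identity $\prescript{\text{RL}}{s}{\mathrm{D}}_t^{\alpha}[\tilde{y}]\,\Theta = D_\alpha\star(\tilde y\,\Theta)$ recorded there (together with $\Theta^{\star(1-\alpha)}(t,s)=(t-s)^{-\alpha}/\Gamma(1-\alpha)\,\Theta(t-s)$ and $\tilde y(s,s)=\tilde y_s$), the Caputo term equals $D_\alpha\star(\tilde y\,\Theta)-\Theta^{\star(1-\alpha)}\tilde y_s$, so \eqref{frac2varinho} reads
\[
D_\alpha\star(\tilde y\,\Theta) \;=\; \tilde f\,\tilde y\,\Theta \;+\; \tilde g\,\Theta \;+\; \Theta^{\star(1-\alpha)}\tilde y_s .
\]

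Next I would $\star$-multiply on the left by $\Theta^{\star\alpha}$ and apply Lemma~\ref{ThetaInverse}: on the objects $\tilde y\,\Theta$ for which the limit and integral in \eqref{Daf} may be exchanged one has $\Theta^{\star\alpha}\star D_\alpha\star(\tilde y\,\Theta)=I\star(\tilde y\,\Theta)=\tilde y\,\Theta$, while $\Theta^{\star\alpha}\star\Theta^{\star(1-\alpha)}=\Theta$ by a direct Beta-function computation (read through analytic continuation when $\alpha>1$). Writing pointwise multiplication by $\tilde f(t)$ as $\star$-multiplication by the multiplier $\tilde f\delta$ — so that $\tilde f\,\tilde y\,\Theta=(\tilde f\delta)\star(\tilde y\,\Theta)$ and $\tilde f\Theta^{\star\alpha}=(\tilde f\delta)\star\Theta^{\star\alpha}$ — and replacing $\Theta\,\tilde y_s=\Theta^{\star\alpha}\star\Theta^{\star(1-\alpha)}\tilde y_s$, I would collect the $\tilde y\,\Theta$ terms to obtain
\[
\bigl(\delta-\Theta^{\star\alpha}\star(\tilde f\delta)\bigr)\star(\tilde y\,\Theta)\;=\;\Theta^{\star\alpha}\star\bigl(\Theta^{\star(1-\alpha)}\tilde y_s+\tilde g\,\Theta\bigr).
\]
Then I would invert the left-hand factor and rearrange. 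Both $\delta-\Theta^{\star\alpha}\star(\tilde f\delta)$ and $\delta-\tilde f\Theta^{\star\alpha}$ are $\star$-invertible on every compact $I^2$ because $\tilde f$ is continuous and $\Theta^{\star\alpha}$ carries the support constraint $t\ge s$ with an integrable kernel, so the iterated $\star$-powers have kernels bounded, up to constants, by $\|\tilde f\|_\infty^{\,k}(t-s)^{k\alpha-1}/\Gamma(k\alpha)$ and the Neumann (Dyson) series converges uniformly — this is exactly the existence argument of Theorem~\ref{thm:star-solution}. Applying that inverse and then the resolvent push-through identity $(\delta-A\star B)^{\star-1}\star A=A\star(\delta-B\star A)^{\star-1}$ with $A=\Theta^{\star\alpha}$, $B=\tilde f\delta$ gives
\[
\tilde y\,\Theta\;=\;\Theta^{\star\alpha}\star(\delta-\tilde f\Theta^{\star\alpha})^{\star-1}\star\bigl(\Theta^{\star(1-\alpha)}\tilde y_s+\tilde g\,\Theta\bigr),
\]
which is \eqref{eq:star-solution2}. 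This forward derivation already yields uniqueness and the formula; for existence I would observe that, once $(\delta-\tilde f\Theta^{\star\alpha})^{\star-1}$ is known to exist, every step reverses, so the right-hand side of \eqref{eq:star-solution2} genuinely solves \eqref{frac2varinho} (including the initial condition at $t=s$).

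The main obstacles are the two points inherited from Section~\ref{sec:computingthesolution} and from the proof of Theorem~\ref{thm:star-solution}, neither of which involves $\tilde g$: first, justifying that $I$ acts as the identity on $\tilde y\,\Theta$ and on the data $\Theta^{\star(1-\alpha)}\tilde y_s+\tilde g\,\Theta$, i.e.\ that the limit/integral exchange in \eqref{Daf} is legitimate there and the kernel direction $X_\alpha$ from the remark after Lemma~\ref{ThetaInverse} is avoided; and second, the uniform convergence on compacts of the Neumann series defining $(\delta-\tilde f\Theta^{\star\alpha})^{\star-1}$, together with the correct reading of $\Theta^{\star\alpha}\star\Theta^{\star(1-\alpha)}=\Theta$ — and of $\Theta^{\star(1-\alpha)}$ itself as a regularized object rather than a genuine $\star$-power — when $\alpha>1$. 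Since the source $\tilde g\,\Theta$ simply rides along to the right of $(\delta-\tilde f\Theta^{\star\alpha})^{\star-1}$, I expect the actual proof to consist of this short reduction to the homogeneous argument.
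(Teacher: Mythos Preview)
Your proposal is correct and, as you anticipated, the paper treats the corollary as a one-line extension of Theorem~\ref{thm:star-solution}: it simply observes that, following the same steps, $G_\alpha$ now satisfies $(\delta-\tilde f\Theta^{\star\alpha})\star G_\alpha=\Theta^{\star(1-\alpha)}\tilde y_s+\tilde g\Theta$, and then $\tilde y\Theta=\Theta^{\star\alpha}\star G_\alpha$ gives \eqref{eq:star-solution2}.

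The one organisational difference is worth noting. You work directly with $\tilde y\Theta$: you left-$\star$-multiply by $\Theta^{\star\alpha}$, obtain $(\delta-\Theta^{\star\alpha}\star(\tilde f\delta))\star(\tilde y\Theta)=\Theta^{\star\alpha}\star(\cdots)$, and then invoke the push-through identity $(\delta-A\star B)^{\star-1}\star A=A\star(\delta-B\star A)^{\star-1}$ to move $\Theta^{\star\alpha}$ outside the resolvent. The paper instead introduces the auxiliary $G_\alpha:=D_\alpha\star\tilde y\Theta$ and exploits that $\tilde f$ depends on $t$ alone, so that $\tilde f\cdot(\Theta^{\star\alpha}\star G_\alpha)=(\tilde f\Theta^{\star\alpha})\star G_\alpha$; this places the resolvent $(\delta-\tilde f\Theta^{\star\alpha})^{\star-1}$ in the correct order from the outset, with no push-through needed. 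Your route is equally valid and the push-through is easily justified via the Neumann series, but the paper's substitution is slightly more economical and makes the dependence-on-$t$-only hypothesis do the work explicitly.
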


\begin{proof}
We first establish the Theorem. 
Let
$
G_\alpha :=D_\alpha\star \tilde{y}\,\Theta.
$
By Eq.~\eqref{CaputoDef} this is 
\begin{align*}
G_\alpha &= %
{\tilde{y}^{(\alpha)}(t)}\Theta
+ \frac{(t-s)^{-\alpha}}{\Gamma(1-\alpha)} \tilde{y}(s) \Theta%
 = %
 {\tilde{y}^{(\alpha)}(t)}\Theta + \Theta^{\star(1-\alpha)}\,\tilde{y}_s.
 \end{align*}
Furthermore, by Lemma~\ref{ThetaInverse}, 
	$$
	\tilde{f}\cdot \tilde{y}\Theta = \tilde{f}\cdot (\Theta^{\star\alpha}\star G_\alpha)=(\tilde{f} \cdot \Theta^{\star\alpha})\star G_\alpha, 
	$$
	where the last equality follows from the observation that $\tilde{f}$ depends only on $t$, and ``$\cdot$'' is the standard  product. Eq.~\eqref{frac2var} can now be recast as
	$$
	G_\alpha - \Theta^{\star(1-\alpha)} \tilde{y}_s = \tilde{f}\Theta^{\star\alpha}\star G_\alpha,
	$$
	that is 
	$$
	(\delta -\tilde{f}\Theta^{\star\alpha})\star G_\alpha  = \Theta^{\star(1-\alpha)} \tilde{y}_s,
	$$
	and thus
	$$
	G_\alpha =  (\delta -\tilde{f}\Theta^{\star\alpha})^{\star -1}\,\star \Theta^{\star(1-\alpha)}\tilde{y}_s.
	$$
	Given that $G_\alpha=D_\alpha\star y\Theta$ the solution is finally obtained as $\tilde{y}\Theta = \Theta^{\star \alpha}\star G_\alpha$ since $\Theta^{\star \alpha}\star D_\alpha=I$ by Lemma~\ref{ThetaInverse} and, for any smooth function $\tilde{y}$, we have $I\star \tilde{y}\Theta = \tilde{y} \Theta$. This gives
	\begin{align}
    \label{YSol}
	\tilde{y}\Theta &= \Theta^{\star \alpha} \star (\delta -\tilde{f}\Theta^{\star\alpha})^{\star -1}\,\star \Theta^{\star(1-\alpha)}\tilde{y}_s.
	\end{align}
We emphasize that this solution is valid in the nonautonomous situations where $\tilde{f}$ depends on $t$. 
Alternatively, since $\delta+(\delta - \tilde{f} \Theta^{\star \alpha})^{\star -1}\star \tilde{f}\Theta^{\star \alpha}=(\delta - \tilde{f} \Theta^{\star \alpha})^{\star -1}$ and $\Theta^{\star \alpha}\star \Theta^{\star(1-\alpha)}=\Theta$, 
\eqref{YSol} can be recast as 
$$\tilde{y}\Theta = \Theta + \Theta^{\star \alpha}\star (\delta - \tilde{f} \Theta^{\star \alpha})^{\star -1}\star \tilde{f}\Theta\,\tilde{y}_s.$$
For the inhomogeneous equation \eqref{frac2varinho}, following the same steps as above we find that $G_\alpha$ satisfies
$$
	(\delta -\tilde{f}\Theta^{\star\alpha})\star G_\alpha  = \Theta^{\star(1-\alpha)} \tilde{y}_s+\tilde{g}(t)\Theta.
$$
Together with $\tilde{y}\Theta = \Theta^{\star \alpha}\star G_\alpha$ this gets the result of  Corollary~\ref{inhomogeneouscorollary}.

\end{proof}
\begin{remark}
The solution \eqref{eq:star-solution} involves a $\star$-resolvent, $(\delta -\tilde{f}\Theta^{\star\alpha})^{\star -1}$, which is analytically available from the $\star$-Neumann series $\sum_{k=0}^\infty(\tilde{f}\Theta^{\star\alpha})^{\star k}$. This series is guaranteed to converge for $\tilde{f}\in\mathcal{C}^\infty(I^2)$ by standard arguments \cite{Giscard2015}. 
\end{remark}

\begin{example}\label{example:mlfunction}
Let $\tilde{f}(t) \equiv f$ be constant, $\alpha\in\mathbb{R}^+$ and consider the autonomous fractional differential equation for $t\geq s$,
$$
{\tilde{y}^{(\alpha)}(t)}
\Theta(t-s)= f \tilde{y}(t)\Theta(t-s).
$$
Given that $f\Theta^{\star \alpha}$ $\star$-commutes with $\Theta^{\star(1-\alpha)}$, the solution of \eqref{eq:star-solution} simplifies to 
$$
\tilde{y}(t,s)\Theta = \Theta \star (\delta -f\Theta^{\star\alpha})^{\star -1}\,\tilde{y}_s.$$ Furthermore for $k\geq 1$ an integer, $(f\Theta^{\star \alpha})^{\star k} = f^k\Theta^{\star k\alpha}$, so the Neumann series for the $\star$-resolvent yields, as expected,
\begin{align*}
\tilde{y}(t,s)\Theta &= \sum_{k=0}^{\infty} f^k\Theta^{\star k\alpha+1}\,\tilde{y}_s=\sum_{k=0}^\infty f^k \frac{(t-s)^{\alpha k}}{\Gamma(\alpha k+1)}\,\tilde{y}_s\Theta,\\
&= E_\alpha\big(f(t-s)^
{\alpha}\big)\,\tilde{y}_s\Theta,
\end{align*}
where $E_\alpha(\cdot)$ designates the Mittag-Leffler function~\eqref{eq:mittag-leffler-definition}
\end{example}

\begin{example}\label{example:nonautonomous}
Let $\alpha\in\mathbb{R}^+$ and consider the nonautonomous linear fractional differential equation 
$$
{\tilde{y}^{(\alpha)}(t)}
\Theta(t)= t\, \tilde{y}(t)\,\Theta(t),\quad \tilde{y}(0)=\tilde{y}_0.$$ 
Given that
$$
\Big(\Theta^{\star \alpha} \star(t\Theta^{\star\alpha})^{\star k}\star \Theta^{\star(1-\alpha)}\Big)(t,0)=\frac{(\alpha +1)^k \Gamma \left(k+\frac{1}{\alpha +1}+1\right) t^{(\alpha +1)   (k+1)}}{\Gamma \left(\frac{1}{\alpha +1}+1\right) \Gamma (\alpha  k+k+\alpha +2)},$$ 
exploiting the $\star$-Neumann series representation of the $\star$-resolvent appearing in the formal solution \eqref{eq:star-solution} we find, for $t\geq 0$,
$$
\tilde{y}(t) = \tilde{y}_0+\frac{\tilde{y}_0}{\Gamma \left(1+\frac{1}{\alpha +1}\right)}\sum_{k=0}^\infty \frac{(\alpha +1)^k\, \Gamma \left(k+\frac{1}{\alpha +1}+1\right) }{ \Gamma (\alpha  k+k+\alpha +2)}\,t^{(\alpha +1)
   (k+1)}.
$$
Once $\alpha$ is given, this can be further expressed in closed-form via generalized hypergeometric functions. For example $\alpha=1/2$ leads to
\begin{equation}\label{eq:example:nonautonomous:sol1}
    \tilde{y}(t) = \tilde{y}_0 \left[ \frac{4 t^{3/2} }{3 \sqrt{\pi
   }}\,
   _2F_2\left(1,\frac{4}{3};\frac{7}{6},\frac{3}{2};\frac{t^3}{3}\right)+\, _1F_1\left(\frac{5}{6};\frac{2}{3};\frac{t^3}{3}\right) \right],
\end{equation}
and $\alpha=1/3$ to
\begin{align}\label{eq:example:nonautonomous:sol2}
\tilde{y}(t) = &~\tilde{y}_0\left[
   _2F_2\left(\frac{7}{12},\frac{11}{12};\frac{1}{2},\frac{3}{4};\frac{t^4}{4}\right) \right. \nonumber\\
   &\qquad +\frac{63 \sqrt{3} t^{8/3} \Gamma \left(\frac{1}{3}\right) }{160 \pi }\,
   _3F_3\left(1,\frac{5}{4},\frac{19}{12};\frac{7}{6},\frac{17}{12},\frac{5}{3};\frac{t^4}
   {4}\right) \\
   &\qquad\left. +\frac{9 \sqrt{3} t^{4/3} \Gamma \left(\frac{2}{3}\right)}{8 \pi }\,
   _3F_3\left(\frac{11}{12},1,\frac{5}{4};\frac{5}{6},\frac{13}{12},\frac{4}{3};\frac{t^4}
   {4}\right) \right] \nonumber,
   \end{align}
while $\alpha=1$ gives $\tilde{y}(t) = e^{t^2/2}\,\tilde{y}_0$. We further remark that the results are valid for any $\alpha\in\mathbb{R}^+$, so for instance $\alpha=2$ correctly recovers \[\tilde{y}(t)=\,_0F_1(2/3, t^3/9)=(\tilde{y}_0/2) \Gamma(2/3) \left(3^{2/3}  \operatorname{Ai}(t)+3^{1/6} \operatorname{Bi}(t)\right),\] where $\operatorname{Ai}(\cdot)$ and $\operatorname{Bi}(\cdot)$ are the Airy functions~\cite[\S 9]{NIST:DLMF}.
   \end{example}

To advance our ``solve-then-discretize'' strategy numerically, we must introduce a suitable discretization for~\eqref{eq:star-solution} to obtain an approximate solution within a prescribed tolerance (see Section~\ref{sec:discretization}). However, our primary interest lies in solving systems of fractional differential equations. Therefore, before addressing the discretization process, the following Section~\ref{sec:nonautonomous_systems} outlines how the theoretical framework can be extended to accommodate this case. Notably, this scenario arises when solving nonautonomous linear partial differential equations with fractional time derivatives, where applying a longitudinal method of lines—i.e., a semi-discretization of the spatial variables—leads to the integration of a system of nonautonomous fractional differential equations.

\section{Nonautonomous systems of linear FDEs}\label{sec:nonautonomous_systems}
\setcounter{section}{4} \setcounter{equation}{0}

Let $\mathsf{M}(t), \mathsf{N}(t)\in \mathcal{C}^\infty(I)^{n\times n}$ be two matrix-valued functions that are smooth on an open neighborhood of a compact time interval $I$. We consider, on $I$, the fractional nonautonomous ordinary differential system 
\begin{equation}\label{Sys}
{\mathsf{U}^{(\alpha)}(t,s)}
= \mathsf{M}(t)\mathsf{U}(t,s)+\mathsf{N}(t),\quad \mathsf{U}(s,s)=\mathsf{I},
\end{equation}
where the Caputo derivative on the left hand side is understood to be taken on each entry of the solution matrix $\mathsf{U}$. The main result of the first part is that the formal solution of a nonautonomous linear fractional differential scalar equation with coefficient function $\tilde{f}(t)$ involves the $\star$-resolvent of $\tilde{f}\Theta^{\star \alpha}$. In the matrix case, 
the definition of $\star$-product \eqref{eq:starprod} generalizes by taking the usual matrix-matrix product inside the integral, i.e., 
\[  
(\mathsf{D}\star \mathsf{E})(t,s):=\int_{-\infty}^\infty \mathsf{D}(t,\sigma)\mathsf{E}(\sigma,s)d\sigma,\quad \mathsf{D},\mathsf{E}\in\mathcal{D}^{n \times n}.
\]
Then, the algebraic proof of Theorem~\ref{thm:star-solution} remains completely valid leading to a similar result, namely,
\begin{corollary}\label{cor:nonautonomous_system}
Consider the fractional nonautonomous system~\ref{Sys}. Then its solution is
\begin{equation}\label{MatrixSol}
\mathsf{U}(t,s)\Theta = 
(\Theta^{
        \star \alpha} \; \mathsf{I})\star \big(\mathsf{I}_\star -\mathsf{M}(t)\Theta^{\star\alpha}\big)^{\star -1} \star (\Theta^{\star(1-\alpha)}\;\mathsf{I}+\mathsf{N}(t)\Theta),
\end{equation}
with $\mathsf{I}_\star = \delta \mathsf{I}$ and $\mathsf{I}$ is the $n\times n$ identity matrix. 
\end{corollary}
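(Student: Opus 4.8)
The plan is to rerun the algebraic proof of Theorem~\ref{thm:star-solution} verbatim, now inside $\mathcal{D}^{n\times n}$ equipped with the matrix $\star$-product, and to check only that each scalar ingredient used there survives the passage to matrices. The scalar distributions $\Theta^{\star\alpha}$, $\Theta^{\star(1-\alpha)}$, $D_\alpha$ and $I$ of Lemmas~\ref{ThetaAlpha}--\ref{ThetaInverse} are read as those distributions times the $n\times n$ identity $\mathsf{I}$; since the matrix $\star$-product is the entrywise $\star$-product composed with matrix multiplication, every identity of Lemma~\ref{ThetaInverse} then holds in $\mathcal{D}^{n\times n}$, in particular the one-sided relations $(\Theta^{\star\alpha}\mathsf{I})\star(D_\alpha\mathsf{I}) = I\,\mathsf{I} = (D_\alpha\mathsf{I})\star(\Theta^{\star\alpha}\mathsf{I})$ and $I\star\mathsf{U}\Theta = \mathsf{U}\Theta$ for smooth matrix-valued $\mathsf{U}$. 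Because the Caputo derivative in \eqref{Sys} acts entrywise, Eq.~\eqref{CaputoDef} also holds entrywise, so setting $\mathsf{G}_\alpha := (D_\alpha\mathsf{I})\star \mathsf{U}\Theta$ gives
\[
\mathsf{G}_\alpha = \prescript{\text{C}}{}{\mathrm{D}_t^\alpha}[\mathsf{U}]\,\Theta + \Theta^{\star(1-\alpha)}\mathsf{U}(s,s) = \prescript{\text{C}}{}{\mathrm{D}_t^\alpha}[\mathsf{U}]\,\Theta + \Theta^{\star(1-\alpha)}\mathsf{I},
\]
using the initial condition $\mathsf{U}(s,s)=\mathsf{I}$.

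Next I would substitute the system. From $(\Theta^{\star\alpha}\mathsf{I})\star(D_\alpha\mathsf{I}) = I\,\mathsf{I}$ one obtains $\mathsf{U}\Theta = (\Theta^{\star\alpha}\mathsf{I})\star\mathsf{G}_\alpha$, so the right-hand side of \eqref{Sys} reads $\mathsf{M}(t)\cdot\big((\Theta^{\star\alpha}\mathsf{I})\star\mathsf{G}_\alpha\big)+\mathsf{N}(t)\Theta$. The one genuinely load-bearing step — the matrix analogue of ``$\tilde f\cdot(\Theta^{\star\alpha}\star G_\alpha)=(\tilde f\Theta^{\star\alpha})\star G_\alpha$'' in the scalar proof — is that $\mathsf{M}$ depends on $t$ only and multiplies on the left, hence can be taken inside the $\star$-integral:
\[
\mathsf{M}(t)\cdot\big((\Theta^{\star\alpha}\mathsf{I})\star\mathsf{G}_\alpha\big)(t,s) = \int \mathsf{M}(t)\,\Theta^{\star\alpha}(t,\sigma)\,\mathsf{G}_\alpha(\sigma,s)\,d\sigma = \big((\mathsf{M}(t)\Theta^{\star\alpha})\star\mathsf{G}_\alpha\big)(t,s).
\]
Combining this with the entrywise form $\mathsf{G}_\alpha-\Theta^{\star(1-\alpha)}\mathsf{I} = \prescript{\text{C}}{}{\mathrm{D}_t^\alpha}[\mathsf{U}]\Theta$ of the first display and rearranging yields $\big(\mathsf{I}_\star-\mathsf{M}(t)\Theta^{\star\alpha}\big)\star\mathsf{G}_\alpha = \Theta^{\star(1-\alpha)}\mathsf{I}+\mathsf{N}(t)\Theta$.

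Finally I would $\star$-invert $\mathsf{I}_\star-\mathsf{M}(t)\Theta^{\star\alpha}$: its $\star$-Neumann series $\sum_{k\geq 0}(\mathsf{M}(t)\Theta^{\star\alpha})^{\star k}$ converges, the scalar estimates of \cite{Giscard2015} carrying over entry by entry once $\|\mathsf{M}\|$ is bounded on the compact $I$, which holds since $\mathsf{M}\in\mathcal{C}^\infty(I)^{n\times n}$. This gives $\mathsf{G}_\alpha = \big(\mathsf{I}_\star-\mathsf{M}(t)\Theta^{\star\alpha}\big)^{\star-1}\star(\Theta^{\star(1-\alpha)}\mathsf{I}+\mathsf{N}(t)\Theta)$, and then $\mathsf{U}\Theta=(\Theta^{\star\alpha}\mathsf{I})\star\mathsf{G}_\alpha$ is exactly \eqref{MatrixSol}.

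I expect the main point to watch — rather than any deep obstacle — to be noncommutativity: one must keep $\mathsf{M}(t)$ strictly on the left throughout and never invoke the scalar freedom to reorder factors; note that $D_\alpha\mathsf{I}$ and $\Theta^{\star\alpha}\mathsf{I}$ are \emph{not} central in $\mathcal{D}^{n\times n}$, but since the argument only ever uses the one-sided identities above this causes no trouble. The well-definedness of $\mathsf{M}(t)\Theta^{\star\alpha}$ and of $\Theta^{\star(1-\alpha)}$ as elements of $\mathcal{D}^{n\times n}$ for non-integer $\alpha$ is handled exactly as the corresponding scalar abuse of notation explained after Lemma~\ref{ThetaInverse}.
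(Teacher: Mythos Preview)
Your proposal is correct and follows exactly the paper's approach: the paper simply states that ``the algebraic proof of Theorem~\ref{thm:star-solution} remains completely valid'' in the matrix setting once the $\star$-product is defined with matrix multiplication inside the integral, and you have carried this out explicitly. Your careful bookkeeping of noncommutativity and the left-multiplication identity $\mathsf{M}(t)\cdot\big((\Theta^{\star\alpha}\mathsf{I})\star\mathsf{G}_\alpha\big)=(\mathsf{M}(t)\Theta^{\star\alpha})\star\mathsf{G}_\alpha$ is precisely the content the paper leaves implicit.
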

The crucial advantage of this formal solution is that  
the $\star$-resolvent $(\mathsf{I}_\star -\mathsf{M}(t)\Theta^{\star\alpha})^{\star -1}$ at its core is amenable to powerful algebraic and combinatorial techniques, such as the path-sum theorem exploiting the structure inherent to the coefficient matrix $\mathsf{M}(t)$ \cite{Giscard2015,Giscard2013,Giscard2012}.

In its most general form, the method of path-sums stems from a fundamental property of walks on graphs: the existence and uniqueness of their factorization into simple paths and simple cycles, walks which do not visit any vertex more than once. This property entails that the formal series of all walks between any two vertices of a graph can be reduced to a fraction  over its simple cycles. This, in turn, implies that entries of resolvents can be represented as fractions over the simple cycles of an underlying graph.
For instance, in the case of interest here note that $(\mathsf{I}_\star -\mathsf{M}(t)\Theta^{\star \alpha})^{\star-1}=\sum_{k\geq0} (\mathsf{M}(t)\Theta^{\star \alpha})^{\star k}$. Seeing $(\mathsf{M}(t)\Theta^{\star \alpha})^{\star k}$ as the powers of an adjacency matrix, the Neumann series for the resolvent becomes a series of walks. Consequently{,} $(\mathsf{I}_\star -\mathsf{M}(t)\Theta^{\star \alpha})^{\star-1}_{ij}$ must admit an exact representation as a fraction over the simple cycles of a graph $\mathcal{G}$ whose adjacency matrix is $\mathsf{M}(t)\Theta^{\star \alpha}$. The approach is exactly the same as that presented in \cite{Giscard2015} for the nonautonomous, integer-order $\alpha=1$ differential equations{,}  so we refer the reader to this work for more details. Here{,}  we shall only present path-sums for fractional systems through an example.

\begin{example} Consider the following system of coupled linear fractional differential equations
\begin{equation}\label{SysExample}
{\mathsf{U}^{(\alpha)}(t)}
= \begin{pmatrix}1+t&-t\\1&0\end{pmatrix}\mathsf{U}(t),\quad \mathsf{U}(0)=\mathsf{I}.
\end{equation}
This is an instance of~\eqref{Sys} with $s=0$. Observe that in the ordinary case $\alpha=1$, the differential system being nonautononous, the solution $\mathsf{U}$ is the time-ordered exponential of $\mathsf{M}(t)=\begin{pmatrix}1+t&-t\\1&0\end{pmatrix}$, which is already difficult to obtain analytically. 

Returning to full generality $\alpha\in\mathbb{R}^+$, 
let $\mathsf{G}:=(\mathsf{I}_\star - \mathsf{M}(t)\Theta^{\star \alpha})^{\star -1}$. According to Eq.~\eqref{MatrixSol} the solution of this fractional differential system is then $$\mathsf{U} = (\Theta^{\star \alpha} \;\mathsf{I}) \star \mathsf{G}\star (\Theta^{\star(1-\alpha)}\;\mathsf{I}).$$
We obtain $\mathsf{G}$, and thence $\mathsf{U}$, with the path-sum theorem. To that end we begin by constructing the graph $\mathcal{G}$ with adjacency $\mathsf{M}(t)\Theta^{\star \alpha}$; see Figure~\ref{fig:pathsum}.
\begin{figure}[h!]
\vspace{-4mm}
    \centering
    \includegraphics[width=0.42\linewidth]{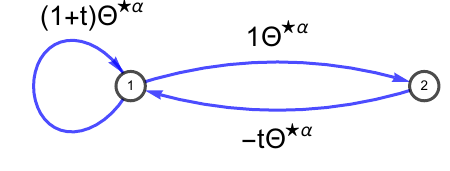}
    \vspace{-2mm}
    \caption{Graph $\mathcal{G}$ for the application of the path-sum theorem to Eq.~\eqref{SysExample}.}
    \label{fig:pathsum}
    \vspace{-5mm}
\end{figure}
\FloatBarrier

\noindent The quantity $\mathsf{G}_{11}$ is expressed in terms of simple cycles from $1$ to $1$ on this graph,
\begin{align*}
\mathsf{G}_{11}&=\Big(1_\star - \overbrace{(1+t)\Theta^{\star \alpha}}^{\text{Loop } 1\leftarrow 1} -\underbrace{(-t\Theta^{\star \alpha})\,\star\,(1\Theta^{\star \alpha})}_{\text{Backtrack }1\leftarrow 2\leftarrow 1}\Big)^{\star -1}.
\end{align*}
This simplifies to  $\mathsf{G}_{11}=\big(1_\star - \Theta^{\star \alpha})^{\star -1}\star (1_\star - t\Theta^{\star \alpha})^{\star -1}$. Each of these resolvents is accessible via its $\star$-Neumann series (see Examples~\ref{example:mlfunction} and \ref{example:nonautonomous}). We find (details in Appendix~\ref{AppA})
\begin{align*}
\mathsf{U}_{11}(t)&= \frac{1}{\Gamma \left(\frac{1}{\alpha +1}\right)}\sum_{k,m=0}^\infty\frac{(\alpha +1)^k\, \Gamma \left(k+\frac{1}{\alpha +1}\right)
   }{
   \Gamma (k+(k+m) \alpha +1)}t^{\alpha  (k+m)+k}.
\end{align*}
{While we could not find a closed-form expression for the above result in terms of Mittag Leffler or fractional trigonometric functions, we can recast it as an infinite series of such functions,
$$
\mathsf{U}_{11}(t)=\frac{1}{\Gamma \left(\frac{1}{\alpha +1}\right)}\sum_{k=0}^{\infty}(\alpha +1)^k \,\Gamma \Big(k+\frac{1}{\alpha +1}\Big) \,t^{(\alpha +1) k} E_{\alpha
   ,\,\alpha  k+k+1}\left(t^{\alpha }\right).
$$} 
Proceeding similarly for other matrix elements of $\mathsf{U}$, we obtain (see Appendix~\ref{AppA}),
\begin{align*}
\mathsf{U}_{21}&=\frac{1}{\Gamma \left(\frac{1}{\alpha
   +1}\right)}\sum_{k,m=0}^\infty \frac{(\alpha +1)^k\, \Gamma \left(k+\frac{1}{\alpha +1}\right)
   }{ \Gamma (k+(k+m+1) \alpha +1)}t^{\alpha  (k+m+1)+k},\\
   &={\frac{1}{\Gamma \left(\frac{1}{\alpha +1}\right)}\sum_{k=0}^\infty (\alpha +1)^k \Gamma \Big(k+\frac{1}{\alpha +1}\Big) t^{\alpha+(\alpha+1) k} \,E_{\alpha
   ,\,\alpha+\alpha  k+k +1}\left(t^{\alpha }\right)}\\
\mathsf{U}_{12}&=-\frac{1}{\Gamma
   \left(\frac{1}{\alpha +1}\right)}\sum_{k=1}^\infty\sum_{m=0}^\infty\frac{(\alpha +1)^k\, \Gamma \left(k+\frac{1}{\alpha +1}\right) }{ \Gamma (k+(k+m) \alpha +1)}t^{\alpha  (k+m)+k},\\
   &={-\mathsf{U}_{11}+E_{\alpha }\left(t^{\alpha }\right)},\\
   \mathsf{U}_{22}&=1-\frac{1}{\Gamma
   \left(\frac{1}{\alpha +1}\right)}\sum_{k=1}^\infty\sum_{m=0}^\infty \frac{(\alpha +1)^k\, \Gamma \left(k+\frac{1}{\alpha +1}\right) }{ \Gamma (k+(k+m+1) \alpha +1)}t^{\alpha  (k+m+1)+k},\\
   &={-\mathsf{U}_{21}+E_{\alpha }\left(t^{\alpha }\right)}.
\end{align*}
This solves the nonautonomous fractional differential system of \eqref{SysExample}.

\end{example}

 \section{Numerical solution}\label{sec:discretization}
\setcounter{section}{5} \setcounter{equation}{0} %
We aim to translate the solution expression~\eqref{eq:star-solution} into something amenable to computation. For this task we can adapt a technique from the ODE setting~\cite{pozza2023new}. Let us consider a sequence of the Legendre polynomials $\{P_k\}_k$, rescaled and shifted so that they are orthonormal over the bounded interval $I = [0,T]$ on which we have defined the FODE~\eqref{eq:fode}, that is
	\[
	\int_{I}  P_k(\tau) P_\ell(\tau) d\tau = \delta_{k,\ell} = \begin{cases}
		0,\quad \text{if }k\neq \ell\\
		1,\quad \text{if } k=\ell				
	\end{cases}.
	\]
	  In this way, $\{P_k\}_k$ is a basis for the space of smooth functions over $I$. Hence, we can expand any distributions of the form $f(t,s)= \widetilde{f}(t,s) \Theta(t-s)$ (which are \emph{piecewise smooth}) as follows
	\begin{align*}
	f(t,s) &= \sum_{k,\ell=0}^\infty f_{k,\ell} \, P_k(t) P_\ell(s), \; t \neq s, \; t,s \in I, {\text{ where }}\\
    f_{k,\ell} &= \iint_{I\times I} f(\tau,\sigma) P_k(\tau) P_\ell(\sigma) \; \mathrm{d} \tau \; \mathrm{d} \sigma.
	\end{align*}
	For a given positive integer $m$, the expansion above can be truncated and expressed in matrix format
	\begin{equation}\label{eq:the-truncated-expansion}
		f_m(t,s) = \sum_{k=0}^{m-1} \sum_{\ell=0}^{m-1} f_{k,\ell} \, {P}_k(t) {P}_\ell(s) = {\phi}_m(t)^\top \mathsf{F}_m \, {\phi}_m(s),
	\end{equation}    
    where the \emph{coefficient matrix} $\mathsf{F}_m$ and the \emph{basis vector} ${\phi}_m(t)$ are defined as
	\begin{equation}\label{eq:coeff:mtx}
		\mathsf{F}_m := \begin{bmatrix}
			f_{0,0} & f_{0,1} & \dots & f_{0,m-1}\\
			f_{1,0} & f_{1,1} & \dots & f_{1,m-1}\\
			\vdots & \vdots &  & \vdots\\
			f_{m-1,0} & f_{m-1,1} & \dots & f_{m-1,m-1}
		\end{bmatrix}, 
		\quad  {\phi}_m(t) :=
		\begin{bmatrix}
			P_0(t)\\
			P_1(t)\\
			\vdots\\
			P_{m-1}(t)
		\end{bmatrix}.
	\end{equation}

Let us define $\mathcal{D}_t$ as the subring of the $\star$-ring $(\mathcal{D}, \star, +)$ generated by the identity distribution $\delta$ and all distributions of the kind $f(t,s) = \tilde{f}(t)\Theta(t-s)$. Moreover, let $f, g, {q} \in \mathcal{D}_t$ be such that $q = f \star g$, and let $\mathsf{F}, \mathsf{G}, \mathsf{Q}$ be the respective infinite coefficient matrices ($m \rightarrow \infty$). As shown in \cite{pozza2023new,Poz24}, it follows that
\begin{equation}\label{eq:prodFG}
    \mathsf{Q} = \mathsf{F} \mathsf{G},
\end{equation}
meaning that the matrix product of the (infinite) matrices $\mathsf{F}, \mathsf{G}$ is well-defined and captures the $\star$-product operation.
In numerical computations, it is necessary to use the truncated coefficient matrices \eqref{eq:coeff:mtx}, with $m< \infty$. In this case, the matrix $\mathsf{Q}_m := \mathsf{F}_m \mathsf{G}_m$ provides only an approximation of $\mathsf{Q}$.
The quality of this approximation depends on the (numerical) band of the involved matrices; a discussion of the truncation errors can be found in~\cite{pozza2023new}.

    In \cite{Poz24}, we showed that $\mathcal{D}_t$ can be mapped onto a subalgebra of the usual matrix algebra. As a result, additional matrix algebra operations are also well-defined and align with corresponding $\star$-operations. Most importantly, the $\star$-inverse is mapped onto the usual inverse of a matrix. 
    By analytic continuation, for $\rho, \alpha \in \mathbb{R}^+$, the expression in Lemma~\ref{ThetaAlpha} can be generalized to
    \[ \Theta^{\star \alpha}(t,s) := \rho^\alpha \sum_{k=0}^\infty\binom{\alpha}{k}\left(\frac{\Theta}{\rho}-\delta \right)^{\star k}. \]  
    Then, the truncated series 
    $$S_j(t,s) := \rho^\alpha\sum_{k=0}^j\binom{\alpha}{k}\left(\frac{\Theta}{\rho}-\delta \right)^{\star k}{,} $$ 
    is in $\mathcal{D}_t$.
    Therefore, denoting with $\mathsf{H}_m${,} the coefficient matrix of $\Theta$, it holds
    \begin{align*}
        S_j(t,s) &= \rho^\alpha \sum_{k=0}^j\binom{\alpha}{k}\left(\frac{\Theta}{\rho}-\delta \right)^{\star k} \\
        &\approx \rho^\alpha \phi_m(t)^\top \left(\sum_{k=0}^j\binom{\alpha}{k}\left(\frac{\mathsf{H}_m}{\rho}-\mathsf{I}_m \right)^{k} \right) \phi_m(s),
    \end{align*}
    for $t,s \in I, t\neq s$.
    Note that for $\rho$ large enough, this last matrix series is always convergent for $j \rightarrow \infty$, obtaining
    \[ \Theta^\alpha(t,s) \approx \phi_m(t)^\top (\mathsf{H}_m)^\alpha \phi_m(s). \]
    In Table~\ref{tab:sta:mtx:alg} we summarize the other main results on the matrix representation of the $\star$-algebra.
   	\begin{table}[!ht]
		\centering
		\begin{tabular}{l|l}
\toprule
			$\star$-operations/objects & matrix operations/objects \\				\midrule
			$p = f \star g$              &  $\mathsf{P} = \mathsf{F} \mathsf{G}$  \\
			$p = f + g$           & $\mathsf{P} = \mathsf{F} + \mathsf{G}$    \\
			$1_{\star} = \delta$ &   $\mathsf{I} $, identity matrix         \\
			$f^{-\star}$                    & $\mathsf{F}^{-1}$, inverse    \\
			$(1_{\star}- f)^{-\star}$   &  $(\mathsf{I}-\mathsf{F})^{-1}$, resolvent  \\
            $\Theta$ & $\mathsf{H}$ \\
\bottomrule
		\end{tabular}
		\caption{Left: $\star$-algebra operations and related objects. Right: Corresponding matrix algebra operations and objects ($f, g, p$ are distributions from $\mathcal{D}_t$, and $\mathsf{F}, \mathsf{G}, \mathsf{P}$ are the corresponding coefficient matrices \eqref{eq:coeff:mtx}) for $m\rightarrow \infty$.}
		\label{tab:sta:mtx:alg}
	\end{table}

	Let us approximate the solution $\tilde{y}(t,s)$ of~\eqref{eq:star-solution} by combining the operations in Table~\ref{tab:sta:mtx:alg}, the truncated matrices \eqref{eq:coeff:mtx}, and the expression of Theorem~\ref{thm:star-solution}. First, we consider the truncated matrix $\mathsf{Y}_m$, i.e., the coefficient matrix of the solution:
	\[
	\tilde{y}(t,s) \approx \tilde{y}_m(t,s) := {\phi}_m(t)^\top \mathsf{Y}_m {\phi}_m(s).
	\]
	Then, we build the coefficient matrix $\mathsf{F}_m$ of the distribution $\widetilde{f}(t)\Theta^{\star \alpha}$. Finally, the solution~\eqref{eq:star-solution} rewrites, as in Theorem~\ref{thm:star-solution} 
	\[
	\tilde{y}(t,s) \approx {\phi}_m(t)^\top \mathsf{Y}_m {\phi}_m(s) \approx {\phi}_m(t)^\top \mathsf{H}_m^\alpha (\mathsf{I}_m - \mathsf{F}_m)^{-1} \mathsf{H}_m^{1-\alpha}{\phi}_m(s){,}
	\]
	where $\mathsf{I}_m$ is the identity matrix of size $m$. 
    As explained in \cite{pozza2023new}, because of the Gibbs phenomenon, the solution is computable only for $s=0$, which is exactly the time of the initial conditions; hence we can compute the solution of Equation~\eqref{eq:fode}.
    In a two-step formulation, this amounts to
	\begin{equation}\label{eq:the-problem-to-be-solved}
		\begin{cases}
			\text{Solve:}\; & (\mathsf{I}_m - \mathsf{F}_m) x = \mathsf{H}_m^{1-\alpha}{\phi}_m(0),\\
			\text{Approximate:}\; & y(t) \approx {\phi}_m(t)^\top \mathsf{H}_m^\alpha x, \quad \forall\,t \in I.
		\end{cases}
	\end{equation}
The elements of the vector $\mathsf{H}_m^\alpha x$ approximate the Legendre coefficients of the function $y(t)$. As shown in \cite{pozza2023new} for the case $\alpha = 1$ (ODE), the truncation error of $\mathsf{H}_m$ and $\mathsf{F}_m$ is amplified in $\mathsf{H}_m^\alpha x$. However, it tends to accumulate towards the last elements of the vector $\mathsf{H}_m^\alpha x$. Therefore, it can be eliminated by setting those elements to zero. While the analysis of the truncation error accumulation is beyond the scope of this paper, { the numerical experiments in Section~\ref{sec:numexamples} (in particular in Section~\ref{sec:exp:trunc:param})} confirm that this also happens for $0< \alpha < 1$. Therefore, it is possible to reduce the truncation error accumulation by erasing the last elements of $\mathsf{H}_m^\alpha x$.

Following the same ideas presented in the ODE case in \cite{Pozza2023a}, for the FODE system treated in Corollary~\ref{cor:nonautonomous_system}, we can also derive a linear algebra expression.
First we extend the operations and objects of Table~\ref{tab:sta:mtx:alg} to the case of $\mathcal{D}_t^{n \times n}$. As done in \cite{Poz24}, let $\mathsf{M}(t,s) = [f_{ij}(t,s)]_{i,j= 1}^n$ be an $n \times n$ matrix with elements $f_{ij}(t) \in \mathcal{D}_t^{n \times n}$. We can map each element $f_{ij}$ into the corresponding (truncated) coefficient matrix $\mathsf{F}_m^{(i,j)}$. Then, we get the following $nm \times nm$ matrix composed of the $m \times m$ blocks
    \begin{equation}\label{eq:coeff:mtx:mtx}
         \mathbf{A}_m := \left[\begin{array}{ccccccc}
					\boxed{\begin{array}{c}
							\mathsf{F}_m^{(1,1)}
					\end{array}} &  \dots & \boxed{\begin{array}{c}
							\mathsf{F}_m^{(1,n)}
					\end{array}}\\
					\vdots         &   \ddots    & \vdots\\
					\boxed{\begin{array}{c}
							\mathsf{F}_m^{(n,1)}
					\end{array}} & \dots & \boxed{\begin{array}{c}
							\mathsf{F}_m^{(n,n)}
					\end{array}}\\
				\end{array}\right].
\end{equation}
Now, given $\mathsf{A}(t,s), \mathsf{B}(t,s), \mathsf{C}(t,s) \in \mathcal{D}_t^{n \times n}$ so that $\mathsf{C}(t,s) = \mathsf{A}(t,s) \star \mathsf{B}(t,s)$, we denote with $\mathbf{A}, \mathbf{B}, \mathbf{C}$
the respective \emph{coefficient matrices} \eqref{eq:coeff:mtx:mtx} for $m \rightarrow \infty$.
Then, we have $\mathbf{C} = \mathbf{A} \mathbf{B}$.  Therefore, Table~\ref{tab:sta:mtx:alg} generalizes to Table~\ref{table:matrixOps}.
 	\begin{table}[!ht]
		\centering
		\begin{tabular}{l|l}
\toprule
			$\star$-operations/objects & matrix operations/objects \\				\midrule
			$\mathsf{C} = \mathsf{A} \star \mathsf{B}$              &  $\mathbf{C} =\mathbf{A} \mathbf{B}$  \\
			$\mathsf{C} = \mathsf{A} + \mathsf{B}$           & $\mathbf{C} = \mathbf{A} + \mathbf{B}$    \\
			$\mathsf{I}_{\star} = \delta(t-s)\mathsf{I}_n$ &   $\mathsf{I} $, identity matrix         \\
			$\mathsf{A}^{-\star}$                    & $\mathbf{A}^{-1}$, inverse    \\
			$(\mathsf{I}_{\star}- \mathsf{A})^{-\star}$   &  $(\mathsf{I}-\mathbf{A})^{-1}$, resolvent  \\
   \bottomrule
		\end{tabular}
		\caption{
        Left: $\star$-algebra operations and related objects. Right: Corresponding matrix algebra operations and objects ($\mathsf{A}, \mathsf{B}, \mathsf{C}$ are from $\mathcal{D}_t^{n \times n}$, and $\mathbf{A}, \mathbf{B}, \mathbf{C}$ are the corresponding block coefficient matrices, for $m \rightarrow \infty$.)}
		\label{table:matrixOps}
	\end{table}
 Then, the approximated solution of Equation~\eqref{Sys} on the interva $I$ is given by transforming the expression in Corollary~\ref{cor:nonautonomous_system} into the matrix formula:
\begin{equation*}%
    \mathsf{U}(t,s) \approx (\mathsf{I}_{n} \otimes \phi_m(t)^\top \mathsf{H}_m^\alpha)(\mathsf{I}_{nm}- \mathbf{M}_m)^{-1} (\mathsf{I}_n \otimes \mathsf{H}_m^{1-\alpha}  + \mathbf{N}_m) (\mathsf{I}_n \otimes \phi_m(s)), 
\end{equation*}
with $\mathbf{M}_m$ and $\mathbf{N}_m$ the (block) coefficient matrix of, respectively, $\mathsf{M}(t)\Theta^{\star \alpha}$ and $\mathsf{N}(t)\Theta$, and $\otimes$ the Kronecker product.

Now, let us consider the following non-autonomous homogeneous FODE
\begin{equation}\label{Sys:vec}
{\tilde{u}^{(\alpha)}(t)}
= \mathsf{M}(t) \tilde{u}(t),\quad \tilde{u}(0)=u_0, \quad t\geq 0,
\end{equation}
where $u_0, u(t)$ are now vectors. 
Then, under the same regularity assumption of the previous case, the FODE solution is approximated by the formula
\begin{equation}\label{eq:star:sol:vec}
    \tilde{u}(t) \approx (\mathsf{I}_{n} \otimes \phi_m(t)^\top \mathsf{H}_m^\alpha)(\mathsf{I}_{nm}- \mathbf{M}_m)^{-1} (u_0 \otimes \mathsf{H}_m^{1-\alpha}\phi_m(0)). 
\end{equation}
Essentially, the solution $\tilde{u}(t)$ is accessible by solving the linear system
\begin{equation}\label{eq:lin:sys}
(\mathsf{I}_{nm}- \mathbf{M}_m) x = u_0 \otimes \mathsf{H}_m^{1-\alpha}\phi_m(0). 
\end{equation}

Finally, the coefficient matrices $\mathsf{H}_m$ can be computed using the method described in \cite{pozza2023new}. Then $(\mathsf{H}_m)^\alpha$ can be obtained by established methods for the computation of matrix functions \cite{Hig08,HigLin13}. In our case, we used the Schur decomposition $H_m = U S U^H$ and the command \verb|S^alpha| in MATLAB R2024b or the command \verb|expm(logm(S)*alpha)| for MATLAB versions before the R2021b one. Note that we have chosen this approach because $m$ is not a large matrix; moreover, note that the Schur decomposition of $\mathsf{H}_m$ can be computed and stored once for all.
If the number of coefficients $m$ becomes large enough that the direct method with computational cost $O(m^3)$ is no longer scalable, an iterative approach can be employed to perform the matrix-vector product operations involving the matrices $\mathsf{H}^{\alpha}_m$ and $\mathsf{H}^{1-\alpha}_m$; see, e.g.,~\cite{MR4410753,MR4235307}. Computing the coefficient matrix of a distribution of the kind $\tilde{f}(t)\Theta^{\star \alpha}$, it is slightly more convoluted, we give the details in Apprendix~\ref{app:coeff}

Overall, the method can be summarized as follows
\begin{algo}[$\star$-method]
The solution of the system of FODEs \eqref{Sys:vec} is obtained by the following procedure.
\begin{algorithmic}[1]
    \State Compute the coefficient matrix $\mathbf{M}_m$.
    \State Solve the linear system \eqref{eq:lin:sys}.
    \State Compute the vector $y = (\mathsf{I}_{n} \otimes \phi_m(t)^\top \mathsf{H}_m^\alpha) x$.
    \State Set to zero the last elements of $y$.
    \State The solution at time $t \in I$ is given by $\phi_m(t)^\top y$. 
\end{algorithmic}
Note that $y$ contains the approximated Legendre coefficients of the truncated expansion of the solution.
\end{algo}
Generally speaking, the most costly part of the method is item 2., the solution of the linear system. For certain large, sparse (or structured) matrices, the solution can be more efficiently computed by a Krylov subspace approach.

\subsection{Matrix equation formulation and Krylov subspace approach}
First, assume that the system of FODEs \eqref{Sys:vec} is autonomous, i.e., $\mathsf{M}(t) \equiv \mathsf{M}$ is constant. Then the block coefficient matrix is $\mathbf{M}_m = \mathsf{M} \otimes \mathsf{H}_m^\alpha$.
As a consequence, the linear system \eqref{eq:lin:sys} can be transformed into the matrix equation
\begin{equation}\label{eq:matrix_equation_complete}
     \mathsf{X} - \mathsf{H}_m^\alpha \mathsf{X} \mathsf{M}^\top = \phi_m^{(\alpha,0)} u_0^\top,  
\end{equation}
with $x = \text{vec}(\mathsf{X})$ the vectorization of $\mathsf{X}$, that is, the $m \times n$ matrix $\mathsf{X}$ is transformed into the $mn$ vector $x$ by stacking the $\mathsf{X}$ columns,  and $$\phi_m^{(\alpha,0)} = \mathsf{H}_m^{(1-\alpha)} \phi_m(0).$$ We refer, e.g., to \cite{Sim16} for more details on matrix equations and their connection with linear systems. 
If $\mathsf{M}$ is a large and sparse (or structured) matrix, then, following \cite{Sim16}, we can use Arnoldi's algorithm~\cite[Alg.~6.1, p.160]{saad03} to build an orthogonal basis $\mathsf{V}_j$ for the Krylov subspace $\mathcal{K}_j(\mathsf{\bar M}, u_0)$, with $\mathsf{\bar M}$ the conjugate of $\mathsf{M}$. Then, by approximating the solution as $\mathsf{X} \approx \mathsf{Y} \mathsf{V}_j^H$, and considering the upper Hessenberg matrix $\mathsf{R}_j = \mathsf{V}_j^H \mathsf{\bar M} \mathsf{V}_j$ we get the reduced-size matrix equation
\begin{equation}\label{eq:matrix_eq_projected}
    \mathsf{Y} - \mathsf{H}_m^\alpha \mathsf{Y} \mathsf{R}_j^H = \phi_m^{(\alpha,0)}  e_1^\top \|u_0\|_2,  
\end{equation}
with $e_1$, the first vector of the canonical basis. 
This latter equation is of much smaller size, hence it can be solved by a direct approach, in our case, by means of the Schur decomposition \cite{Sim16}.

Consider now a non-autonomous system of FODEs where $\mathsf{M}(t) = \mathsf{K} + \mathsf{L} \tilde{f}(t)$, where $\mathsf{K}, \mathsf{L}$ are constant matrices and $\tilde{f}(t)$ is a scalar function. Then, following the same procedure as above, we obtain the matrix equation
\begin{equation}\label{eq:time_dependent_mex}
    \mathsf{X} - \mathsf{H}_m^\alpha \mathsf{X} \mathsf{K}^\top - \mathsf{F}_m \mathsf{X} \mathsf{L}^\top = \phi_m^{(\alpha,0)} u_0^\top,  
\end{equation} 
where $\mathsf{F}_m$ is the coefficient matrix of $\tilde{f}\Theta^{\star \alpha}$. In this situation, it is generally no longer possible to apply the Krylov subspace approach described above. However, it is often observed that equations such as this, where the right-hand side is a rank $1$ matrix, have a solution $\mathsf{X}$ with a numerically low rank (e.g., in \cite{Pozza2023a}). This is exploited to devise a cheaper Krylov-based solver as described in Appendix~\ref{sec:solution_alg_details} It is worth noting that, in general, reformulating the solution of fractional differential equations---particularly in the multidimensional case, or when spatial and temporal discretizations are treated simultaneously---as a suitable matrix equation is a topic that has been extensively studied in the literature~\cite{MR3498143,MR3995304}. A key advantage of the present case is that the matrix $\mathsf{H}_{m}^\alpha$ is typically low-dimensional and possesses a structure that can be effectively exploited.

\section{Numerical examples}\label{sec:numexamples}
\setcounter{section}{6} \setcounter{equation}{0} %

This section presents a series of numerical experiments designed to validate the solution formulation described in Section~\ref{sec:solution_as_resolvents}. It also outlines a framework for comparing the proposed approach with existing integrators in terms of both accuracy and computational cost, employing the techniques discussed in Section~\ref{sec:discretization}. 

We begin our analysis in Section~\ref{sec:scalar_cases} by evaluating the numerical solution of scalar problems. In particular, we consider an autonomous problem whose solution is given by the Mittag-Leffler function (Example~\ref{example:mlfunction}) and two closed-form solutions corresponding to scalar nonautonomous systems (Example~\ref{example:nonautonomous}). In Section~\ref{sec:fractional_schrodinger}, we extend our study to the numerical solution of the fractional Schr\"odinger equation, addressing both the autonomous case—where the associated Hamiltonian implies a time-independent potential—and the nonautonomous case, in which the potential is time-dependent.

The experiments were carried out on a Linux-based system equipped with an Intel\textsuperscript{\textregistered} Core\texttrademark\ i9-14900HX processor (24 cores, 48 threads, up to 5.8\,GHz), 64\,GB of RAM, and running \textsc{MATLAB} version 24.2.0.2740171 (R2024b Update 1) for the scalar test cases. The numerical solution of the fractional Schr\"odinger equation was executed on a single node of the Toeplitz cluster, located in the green data center of the University of Pisa. This node is equipped with dual Intel\textsuperscript{\textregistered} Xeon\textsuperscript{\textregistered} E5-2650 v4 processors (24 cores, 48 threads, 2.20\,GHz), 264\,GB of RAM, and it runs \textsc{MATLAB} version 9.10.0.1602886 (R2021a).
To compute Legendre basis expansion we employ the \texttt{Chebfun} package~\cite{chebfun} (v5.7.0). The code to reproduce the following example is available in the Git repository hosted on GitHub \href{https://github.com/Cirdans-Home/starfractional}{Cirdans-Home/starfractional}.

\subsection{\bf Scalar problems with closed form solutions}\label{sec:scalar_cases}

To validate the solution procedure introduced, we consider a fundamental test case: recovering the solution of the scalar fractional differential equation~\eqref{eq:fode} for a constant \(\lambda \in \mathbb{R}\). Specifically, we aim to reconstruct the solution of the linear homogeneous fractional ordinary differential equation with the Caputo fractional derivative of order \(\alpha \in (0,1)\) on the interval \([0, 2]\) discussed in Example~\ref{example:mlfunction}, and given by:
\[
f(t,\tilde{y}) = F \tilde{y}(t), \qquad F\,\in\,\mathbb{R}.
\]
The solution to this equation is expressed as:
\[
\tilde{y}(t) = \tilde{y}_0 E_\alpha(F t^\alpha),
\]
where \(E_\alpha(z)\) is the Mittag-Leffler function~\eqref{eq:mittag-leffler-definition}.
To validate the solution obtained using the $\star$-method, we compare the computed results with those obtained using the MATLAB function \texttt{ml} whose implementation is described in~\cite{MR3068601,MR3350038}.
\begin{figure}[htbp]
    \centering

    \subfigure[Comparison of the the solution computed through the \texttt{ml} function and the $\star$-Lanczos approach]{\input{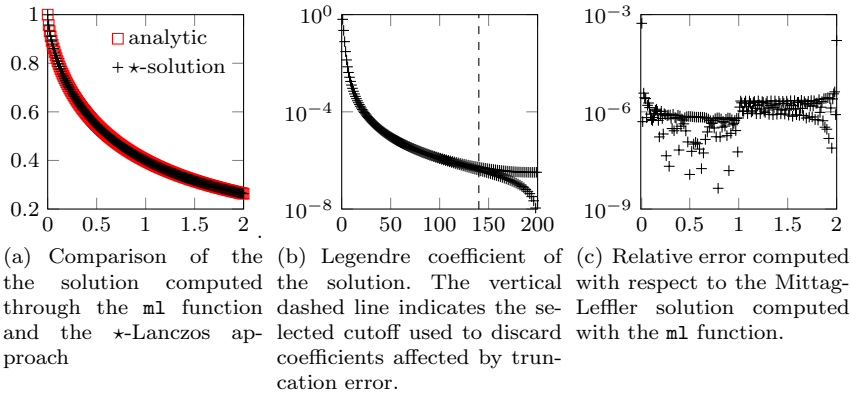}.}\hspace{0.6em}%
    \subfigure[Legendre coefficient of the solution. The vertical dashed line indicates the selected cutoff used to discard coefficients affected by truncation error.\label{fig:mittag-leffler-solution-legendrecoeff}]{\begin{tikzpicture}

\begin{axis}[%
width=0.217\columnwidth,
height=0.217\columnwidth,
at={(0\columnwidth,0\columnwidth)},
scale only axis,
xmin=0,
xmax=200,
ymode=log,
ymin=1e-08,
ymax=1,
yminorticks=true,
axis background/.style={fill=white},
]
\addplot [color=black, only marks, mark=+, mark options={solid, black}, forget plot]
  table[row sep=crcr]{%
1	0.639547393973909\\
2	0.223366283162605\\
3	0.0773166722551479\\
4	0.0301254825225075\\
5	0.0136740369890245\\
6	0.00714090644373295\\
7	0.00416673678745766\\
8	0.00264235923767331\\
9	0.00178360610409737\\
10	0.00126279309111692\\
11	0.000928224962412257\\
12	0.0007031226711427\\
13	0.000545974564830199\\
14	0.000432751207505432\\
15	0.00034910699893516\\
16	0.000285849919101296\\
17	0.000237175973464929\\
18	0.000199004664492524\\
19	0.000168723925790177\\
20	0.000144290803615551\\
21	0.000124449549776469\\
22	0.00010806284839429\\
23	9.45119471547073e-05\\
24	8.30970164447379e-05\\
25	7.35238405835443e-05\\
26	6.53179308155507e-05\\
27	5.83622380587032e-05\\
28	5.23041270902472e-05\\
29	4.71293611535615e-05\\
30	4.25535171021034e-05\\
31	3.86254808388226e-05\\
32	3.50997872901924e-05\\
33	3.20662194388268e-05\\
34	2.93016130412394e-05\\
35	2.69236775372342e-05\\
36	2.47217069378436e-05\\
37	2.28334902384347e-05\\
38	2.10547737624654e-05\\
39	1.95385263090538e-05\\
40	1.8083176718229e-05\\
41	1.68537274537409e-05\\
42	1.56488633785452e-05\\
43	1.46435500948679e-05\\
44	1.36351305816986e-05\\
45	1.28071412071377e-05\\
46	1.19545082194039e-05\\
47	1.12683849119801e-05\\
48	1.05405428201225e-05\\
49	9.9690886684839e-06\\
50	9.34212202965964e-06\\
51	8.86423612631294e-06\\
52	8.31948802489898e-06\\
53	7.91862648229525e-06\\
54	7.44139413573057e-06\\
55	7.10446035618863e-06\\
56	6.68304824988956e-06\\
57	6.39958215509961e-06\\
58	6.02460600176173e-06\\
59	5.78618448123857e-06\\
60	5.45005357819551e-06\\
61	5.24984213702404e-06\\
62	4.9463702256872e-06\\
63	4.77878420546788e-06\\
64	4.50289398608497e-06\\
65	4.36334009479899e-06\\
66	4.11083675325513e-06\\
67	3.99551407842172e-06\\
68	3.76291018574698e-06\\
69	3.66865571167845e-06\\
70	3.45303473034912e-06\\
71	3.37720245029881e-06\\
72	3.1761115194184e-06\\
73	3.11647713762642e-06\\
74	2.92784224132859e-06\\
75	2.88252752138368e-06\\
76	2.70458591563019e-06\\
77	2.67199823055563e-06\\
78	2.50324426298385e-06\\
79	2.48202799300326e-06\\
80	2.32116940126131e-06\\
81	2.3101666302806e-06\\
82	2.15608908421469e-06\\
83	2.15430763769112e-06\\
84	2.00604581510235e-06\\
85	2.01263313425162e-06\\
86	1.86934699847888e-06\\
87	1.88356868151447e-06\\
88	1.74452393092381e-06\\
89	1.76574602181642e-06\\
90	1.63029789578608e-06\\
91	1.65797220665646e-06\\
92	1.52555201463855e-06\\
93	1.55920390779561e-06\\
94	1.42930776838079e-06\\
95	1.46852594848574e-06\\
96	1.34070533844595e-06\\
97	1.38513329476364e-06\\
98	1.25898707991262e-06\\
99	1.30831588432703e-06\\
100	1.18348357664599e-06\\
101	1.23744580539536e-06\\
102	1.11360183049828e-06\\
103	1.17196642164821e-06\\
104	1.04881522479111e-06\\
105	1.11138311285135e-06\\
106	9.88654968689895e-07\\
107	1.05525537366283e-06\\
108	9.3270277703657e-07\\
109	1.00319004217054e-06\\
110	8.80584591828249e-07\\
111	9.54835486719433e-07\\
112	8.31965177745645e-07\\
113	9.09876594898956e-07\\
114	7.8654346162228e-07\\
115	8.68030446778123e-07\\
116	7.44048494796052e-07\\
117	8.29042568087439e-07\\
118	7.04235954317831e-07\\
119	7.92683675306181e-07\\
120	6.66885097244894e-07\\
121	7.58746843680221e-07\\
122	6.31796104523115e-07\\
123	7.27045037524125e-07\\
124	5.98787759699518e-07\\
125	6.97408949283612e-07\\
126	5.67695414764929e-07\\
127	6.69685107590936e-07\\
128	5.38369202430436e-07\\
129	6.43734216392819e-07\\
130	5.10672464079245e-07\\
131	6.19429692912376e-07\\
132	4.84480361224741e-07\\
133	5.96656380823516e-07\\
134	4.5967864758162e-07\\
135	5.75309414061814e-07\\
136	4.36162581834139e-07\\
137	5.55293211729575e-07\\
138	4.13835960436657e-07\\
139	5.36520590039974e-07\\
140	3.92610257714856e-07\\
141	5.18911973416919e-07\\
142	3.72403857309565e-07\\
143	5.02394696189659e-07\\
144	3.53141365151842e-07\\
145	4.86902381242694e-07\\
146	3.34752993663787e-07\\
147	4.72374389131825e-07\\
148	3.17174007358752e-07\\
149	4.58755327516527e-07\\
150	3.00344222656765e-07\\
151	4.45994615634192e-07\\
152	2.84207556775313e-07\\
153	4.34046097732315e-07\\
154	2.6871161641664e-07\\
155	4.22867699554918e-07\\
156	2.53807324899871e-07\\
157	4.12421124608975e-07\\
158	2.39448579337826e-07\\
159	4.026715853395e-07\\
160	2.25591937244893e-07\\
161	3.93587567233265e-07\\
162	2.12196325433792e-07\\
163	3.85140622298783e-07\\
164	1.99222771353827e-07\\
165	3.77305189329902e-07\\
166	1.8663414952474e-07\\
167	3.7005844012723e-07\\
168	1.74394945013972e-07\\
169	3.63380149597481e-07\\
170	1.6247102644661e-07\\
171	3.57252587961337e-07\\
172	1.50829429860457e-07\\
173	3.51660435449727e-07\\
174	1.39438148477146e-07\\
175	3.46590717915375e-07\\
176	1.28265926696266e-07\\
177	3.42032764002069e-07\\
178	1.17282056189056e-07\\
179	3.37978183308116e-07\\
180	1.06456169945882e-07\\
181	3.34420866629649e-07\\
182	9.57580346513918e-08\\
183	3.31357009080178e-07\\
184	8.51573342105503e-08\\
185	3.28785156997697e-07\\
186	7.46234448192194e-08\\
187	3.26706281574759e-07\\
188	6.412519681403e-08\\
189	3.25123880196654e-07\\
190	5.36306175743639e-08\\
191	3.24044109904695e-07\\
192	4.3106654800808e-08\\
193	3.23475956324551e-07\\
194	3.2518870295846e-08\\
195	3.23431441624982e-07\\
196	2.18311007640001e-08\\
197	3.23925880977246e-07\\
198	1.10050782976428e-08\\
199	3.24978190017241e-07\\
200	0\\
};
\addplot [color=black, dashed, forget plot]
  table[row sep=crcr]{%
140	1e-08\\
140	1\\
};
\end{axis}
\end{tikzpicture}
    \subfigure[Relative error computed with respect to the Mittag-Leffler solution computed with the \texttt{ml} function.\label{fig:mittag-leffler-solution-relative-error}]{\begin{tikzpicture}

\begin{axis}[%
width=0.217\columnwidth,
height=0.217\columnwidth,
at={(0\columnwidth,0\columnwidth)},
xtick={1,50,100,150,200},
xticklabels={0,0.5,1,1.5,2},
scale only axis,
xmin=0,
xmax=200,
ymode=log,
ymin=1e-09,
ymax=0.001,
yminorticks=true,
axis background/.style={fill=white},
]
\addplot [color=black, only marks, mark=+, mark options={solid, black}, forget plot]
  table[row sep=crcr]{%
1	0.000538441147488133\\
2	4.98593234499312e-07\\
3	3.77693784557557e-06\\
4	2.72461858381064e-06\\
5	2.56514067103483e-06\\
6	6.62218122738428e-07\\
7	1.58740289680569e-06\\
8	1.87031728914436e-06\\
9	1.35177860807028e-06\\
10	8.48608558623976e-07\\
11	5.81258482352235e-07\\
12	5.40847559571971e-07\\
13	6.68703699754056e-07\\
14	8.92153666599337e-07\\
15	1.11337766155157e-06\\
16	1.20388404331955e-06\\
17	1.03365362923043e-06\\
18	5.43410258727991e-07\\
19	1.68364376841706e-07\\
20	8.15932478372737e-07\\
21	1.03408341723889e-06\\
22	6.2477289734224e-07\\
23	2.0712914605276e-07\\
24	8.77054436240933e-07\\
25	8.23268649217556e-07\\
26	4.41113148167733e-08\\
27	7.65702969346135e-07\\
28	7.94310881771597e-07\\
29	2.0862348033372e-08\\
30	7.99506573541171e-07\\
31	6.30297905807943e-07\\
32	3.05060649700446e-07\\
33	8.37955246240819e-07\\
34	2.6038562763348e-07\\
35	6.61612886211612e-07\\
36	6.4138745276275e-07\\
37	3.03623190239153e-07\\
38	7.84065097351413e-07\\
39	6.62530113047238e-08\\
40	7.43795508080215e-07\\
41	3.59894476562044e-07\\
42	6.02365664818536e-07\\
43	5.54852496231066e-07\\
44	4.2666717835353e-07\\
45	6.63977251802647e-07\\
46	2.58339437717593e-07\\
47	7.12132605626717e-07\\
48	1.17764562910178e-07\\
49	7.2355516299842e-07\\
50	1.16788388141307e-08\\
51	7.16944710891078e-07\\
52	6.02770585144593e-08\\
53	7.04698932917925e-07\\
54	1.01240705489783e-07\\
55	6.93814691268273e-07\\
56	1.14780944908029e-07\\
57	6.87151611703637e-07\\
58	1.03910328198194e-07\\
59	6.84528541455208e-07\\
60	7.0923723990945e-08\\
61	6.83538659801863e-07\\
62	1.76943523596725e-08\\
63	6.80146635839388e-07\\
64	5.3801189823185e-08\\
65	6.69191393726558e-07\\
66	1.40880241877988e-07\\
67	6.44910561079968e-07\\
68	2.39586322188131e-07\\
69	6.01573857892612e-07\\
70	3.44266056674978e-07\\
71	5.34257443474252e-07\\
72	4.47395212153437e-07\\
73	4.39735699042259e-07\\
74	5.39754099040912e-07\\
75	3.17397357410758e-07\\
76	6.11016459759929e-07\\
77	1.7004129700311e-07\\
78	6.50763941432105e-07\\
79	4.36596768816352e-09\\
80	6.49851691939978e-07\\
81	1.6902882865742e-07\\
82	6.01963133138422e-07\\
83	3.36268869935511e-07\\
84	5.0510321950455e-07\\
85	4.81504712215747e-07\\
86	3.6273386300366e-07\\
87	5.88802385869092e-07\\
88	1.84255988472347e-07\\
89	6.44398792806563e-07\\
90	1.5379235885324e-08\\
91	6.38978568664233e-07\\
92	2.17013297802511e-07\\
93	5.69559346510566e-07\\
94	3.99474921299934e-07\\
95	4.40592515746775e-07\\
96	5.42340377743855e-07\\
97	2.63990189824459e-07\\
98	6.28822938185729e-07\\
99	5.79755603428072e-08\\
100	6.48290066081987e-07\\
101	1.39998911932921e-06\\
102	2.15062343516989e-06\\
103	1.90211583701024e-06\\
104	1.06466936842171e-06\\
105	1.03517489310987e-06\\
106	1.86109470927575e-06\\
107	2.1560788894347e-06\\
108	1.41979364038248e-06\\
109	8.82770329669155e-07\\
110	1.44611470904151e-06\\
111	2.16118236202329e-06\\
112	1.81782071110615e-06\\
113	9.88832316415164e-07\\
114	1.07416588907177e-06\\
115	1.93059234189075e-06\\
116	2.10055622355623e-06\\
117	1.28669103420901e-06\\
118	8.73789492969188e-07\\
119	1.57100099425395e-06\\
120	2.18189012233034e-06\\
121	1.64685987350068e-06\\
122	8.82720642014507e-07\\
123	1.21580793801632e-06\\
124	2.07048351554422e-06\\
125	1.94844959265129e-06\\
126	1.05344497821063e-06\\
127	9.60668086312488e-07\\
128	1.8407639885819e-06\\
129	2.12792603385118e-06\\
130	1.29757007638807e-06\\
131	8.35112301336781e-07\\
132	1.58251685447616e-06\\
133	2.18663979934455e-06\\
134	1.53359865649333e-06\\
135	8.12813399913748e-07\\
136	1.3623270058309e-06\\
137	2.16805458314169e-06\\
138	1.7124803533082e-06\\
139	8.40783935192978e-07\\
140	1.21202671231073e-06\\
141	2.12767377711365e-06\\
142	1.81641034840495e-06\\
143	8.66227800210484e-07\\
144	1.13998745325364e-06\\
145	2.11155236741798e-06\\
146	1.84067604095347e-06\\
147	8.52574437003419e-07\\
148	1.1529410079971e-06\\
149	2.1439060586644e-06\\
150	1.77112633828229e-06\\
151	7.91617975044237e-07\\
152	1.27612039260987e-06\\
153	2.21071040543003e-06\\
154	1.57068764646515e-06\\
155	7.28935734663703e-07\\
156	1.55293779817122e-06\\
157	2.22219084866423e-06\\
158	1.20555762922825e-06\\
159	8.1072029326736e-07\\
160	1.97488339268287e-06\\
161	1.97309770597877e-06\\
162	7.79790020304056e-07\\
163	1.27451072419245e-06\\
164	2.27425386240819e-06\\
165	1.27911757044736e-06\\
166	7.74701011076549e-07\\
167	2.07995581903311e-06\\
168	1.80352935666092e-06\\
169	6.42265642198099e-07\\
170	1.73475144167889e-06\\
171	2.11163139117603e-06\\
172	6.89048458768373e-07\\
173	1.50706345693084e-06\\
174	2.22712688432339e-06\\
175	6.96734880885876e-07\\
176	1.51703660802632e-06\\
177	2.19459180881095e-06\\
178	5.81233434436185e-07\\
179	1.84332971787033e-06\\
180	1.86452055207814e-06\\
181	5.61592127157818e-07\\
182	2.38760793010154e-06\\
183	9.61531186050116e-07\\
184	1.39162867271143e-06\\
185	2.06558008308418e-06\\
186	5.31624749127527e-07\\
187	2.55948782616378e-06\\
188	3.35501350041819e-07\\
189	2.60600977308519e-06\\
190	2.90658829243147e-07\\
191	2.72735205335271e-06\\
192	1.50766264418116e-07\\
193	2.55436464118054e-06\\
194	1.31485490946204e-06\\
195	7.6149787467289e-08\\
196	2.50102590487558e-06\\
197	3.59645116022609e-06\\
198	4.13518852232523e-06\\
199	8.2137493479207e-07\\
200	0.000158896379285126\\
};
\end{axis}
\end{tikzpicture}

    \caption{Numerical solution with the $\star$-Lanczos approach on the autonomous scalar test equation from Example~\ref{example:mlfunction} and comparison with the true solution expressed in term of the Mittag-Leffler function ($\tilde{y}_0 = 1$, $F = -1$, $\alpha = 0.7$ and $M = 200$ basis function used).}
    \label{fig:mitag-leffler-solution}
\end{figure}
Fig.~\ref{fig:mitag-leffler-solution} contains the solution computed with the $\star$-approach for $\tilde{y}_0 = 1$, $F = -1$ and $\alpha = 0.7$, in particular, panel~\ref{fig:mittag-leffler-solution-legendrecoeff} shows the Legendre basis coefficients corresponding to the application of the discrete equivalent of the $\star$-resolvent to the Legendre coefficient expansion of the basis function. In the experiment, we observe the behavior described in Section~\ref{sec:discretization}, where the final coefficients are affected by truncation error due to the finite number of basis functions used in the series expansion. To reconstruct the solution, we select a subset of coefficients in the Legendre basis that are not impacted by truncation—specifically, $k=140$—thereby obtaining the relative error shown in the third panel, Fig~\ref{fig:mittag-leffler-solution-relative-error}. This relative error is consistent with the expected order of magnitude corresponding to the truncated coefficients.

The other test case we want to consider is the one based on Example~\ref{example:nonautonomous} where we consider a nonautonomous FDE with source term \[f(t,y) = t y(t), \qquad t \geq 0,\]  on the interval \([0, 2]\), by exploiting the $\star$-formalism we have obtained an expression of the solution in terms of generalized hypergeometric functions~\eqref{pFq} that we can then use to validate what was obtained through the solution procedure described in Section~\ref{sec:discretization}. Specifically, we select $\tilde{y}_0 = $1 and $\alpha = \nicefrac{1}{2}$ to recover the solution~\eqref{eq:example:nonautonomous:sol1}, and $\tilde{y}_0 = 1$ and $\alpha = \nicefrac{1}{3}$ to recover the solution~\eqref{eq:example:nonautonomous:sol2}. 
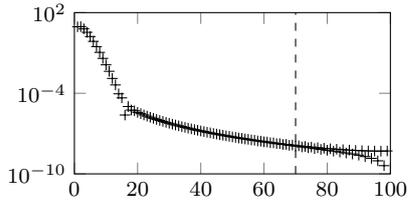
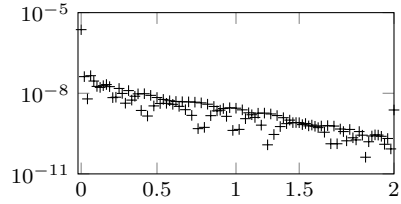
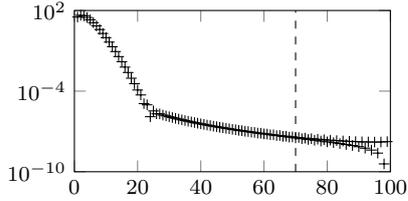
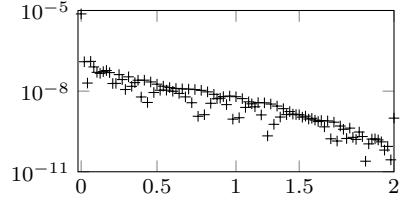
\begin{figure}[htbp]
    \centering

    \subfigure[$\alpha = \nicefrac{1}{2}$; Legendre coefficient of the solution. The vertical dashed line indicates the selected cutoff used to discard coefficients affected by truncation error.]{\begin{tikzpicture}

\begin{axis}[%
width=0.35\columnwidth,
height=0.18\columnwidth,
at={(0\columnwidth,0\columnwidth)},
scale only axis,
xmin=0,
xmax=100,
ymode=log,
ymin=1e-10,
ymax=100,
yminorticks=true,
axis background/.style={fill=white},
]
\addplot [color=black, only marks, mark=+, mark options={solid, black}, forget plot]
  table[row sep=crcr]{%
1	8.85096864682702\\
2	8.99713443809286\\
3	6.20376139041134\\
4	3.43066281519333\\
5	1.67097616876192\\
6	0.729202359092187\\
7	0.294948534736707\\
8	0.110433156885693\\
9	0.0394155887583715\\
10	0.0131076682445695\\
11	0.00430759555354362\\
12	0.00126649340439496\\
13	0.000417012799311065\\
14	9.12408492120857e-05\\
15	4.55055772856379e-05\\
16	2.40574053537973e-06\\
17	1.0287944652112e-05\\
18	5.64054113541276e-06\\
19	4.96094551419046e-06\\
20	3.76418315052394e-06\\
21	3.03760715068911e-06\\
22	2.4401278289224e-06\\
23	1.98917466306122e-06\\
24	1.63345991418738e-06\\
25	1.35366775920849e-06\\
26	1.12949731564432e-06\\
27	9.49852496714516e-07\\
28	8.0317590516264e-07\\
29	6.84053147714603e-07\\
30	5.85026415449101e-07\\
31	5.03812318467167e-07\\
32	4.35104536890318e-07\\
33	3.7840058507406e-07\\
34	3.29560259546797e-07\\
35	2.89150715412346e-07\\
36	2.53667771266914e-07\\
37	2.24360606217948e-07\\
38	1.98063146872114e-07\\
39	1.76488724130022e-07\\
40	1.56634980733019e-07\\
41	1.40554116190459e-07\\
42	1.25301113143819e-07\\
43	1.13194122034161e-07\\
44	1.01277608476815e-07\\
45	9.20934341226679e-08\\
46	8.2629991556153e-08\\
47	7.56292475343612e-08\\
48	6.79910477534086e-08\\
49	6.26456020507574e-08\\
50	5.63794264916584e-08\\
51	5.23068483051679e-08\\
52	4.70807060262239e-08\\
53	4.4000684854656e-08\\
54	3.95679919972869e-08\\
55	3.72728662260115e-08\\
56	3.34479992844032e-08\\
57	3.17825313095142e-08\\
58	2.84239020565895e-08\\
59	2.72711902971399e-08\\
60	2.42693482870334e-08\\
61	2.35409040880446e-08\\
62	2.08100526334822e-08\\
63	2.04387209648983e-08\\
64	1.7910598760774e-08\\
65	1.78454717404242e-08\\
66	1.54648652518767e-08\\
67	1.5667588034229e-08\\
68	1.33890062045352e-08\\
69	1.38310904918602e-08\\
70	1.16162514307191e-08\\
71	1.22771113385199e-08\\
72	1.00930176527882e-08\\
73	1.09585531978704e-08\\
74	8.77597282913893e-09\\
75	9.83755169928157e-09\\
76	7.629798567032e-09\\
77	8.88355636747232e-09\\
78	6.62546381601678e-09\\
79	8.07186632336127e-09\\
80	5.73889325650015e-09\\
81	7.38250600432634e-09\\
82	4.94990071203226e-09\\
83	6.79936492513569e-09\\
84	4.24135124287142e-09\\
85	6.30955794398418e-09\\
86	3.59845754005851e-09\\
87	5.90294480118414e-09\\
88	3.00818733172394e-09\\
89	5.57181974416633e-09\\
90	2.45870563837242e-09\\
91	5.31076008010091e-09\\
92	1.93877881930589e-09\\
93	5.11675547367301e-09\\
94	1.43690364767482e-09\\
95	4.98997006123129e-09\\
96	9.39381221723058e-10\\
97	4.93759469612124e-09\\
98	4.09484830644725e-10\\
99	5.06951842959225e-09\\
100	0\\
};
\addplot [color=black, dashed, forget plot]
  table[row sep=crcr]{%
70	1e-10\\
70	100\\
};
\end{axis}
\end{tikzpicture}
    \subfigure[$\alpha = \nicefrac{1}{2}$; relative error with respect to the true solution~\eqref{eq:example:nonautonomous:sol1}]{\begin{tikzpicture}

\begin{axis}[%
width=0.35\columnwidth,
height=0.18\columnwidth,
at={(0\columnwidth,0\columnwidth)},
scale only axis,
xtick={1,25,50,70,100},
xticklabels={0,0.5,1,1.5,2},
xmin=0,
xmax=100,
ymode=log,
ymin=1e-11,
ymax=1e-05,
yminorticks=true,
axis background/.style={fill=white},
]
\addplot [color=black, only marks, mark=+, mark options={solid, black}, forget plot]
  table[row sep=crcr]{%
1	2.33036741281722e-06\\
2	4.15769686219902e-08\\
3	6.15867829160718e-09\\
4	4.51383564073284e-08\\
5	2.84844325473095e-08\\
6	1.79758646247129e-08\\
7	1.65052698612712e-08\\
8	1.92067802857453e-08\\
9	2.11162097932405e-08\\
10	1.76411538292919e-08\\
11	6.88777817371812e-09\\
12	7.13148360089337e-09\\
13	1.50845675233408e-08\\
14	9.83902321956345e-09\\
15	4.27112836501504e-09\\
16	1.25764015190975e-08\\
17	5.56693690414217e-09\\
18	7.74394371374395e-09\\
19	9.51196246606056e-09\\
20	2.29091893620445e-09\\
21	9.6342395006329e-09\\
22	1.42276805658902e-09\\
23	8.34831808783253e-09\\
24	3.38652948319011e-09\\
25	6.94954383994645e-09\\
26	4.16061444220746e-09\\
27	5.90197994530279e-09\\
28	4.22607662692606e-09\\
29	5.26828022021199e-09\\
30	3.87430141280204e-09\\
31	4.95913064987495e-09\\
32	3.25601931465816e-09\\
33	4.84242138293586e-09\\
34	2.44820322394147e-09\\
35	4.78414682434686e-09\\
36	1.50427832163241e-09\\
37	4.66566191010194e-09\\
38	4.83214901009199e-10\\
39	4.39487049784071e-09\\
40	5.38369228005538e-10\\
41	3.91741275759029e-09\\
42	1.46932914874982e-09\\
43	3.22431340229286e-09\\
44	2.21676686458907e-09\\
45	2.35394313529435e-09\\
46	2.70482447903646e-09\\
47	1.38393021122127e-09\\
48	2.89157627603695e-09\\
49	4.14915368761078e-10\\
50	2.77961723700565e-09\\
51	4.51780416898369e-10\\
52	2.41518000108188e-09\\
53	1.13512292002987e-09\\
54	1.877457723678e-09\\
55	1.58980913232696e-09\\
56	1.26000517836299e-09\\
57	1.81086842295888e-09\\
58	6.51685012775827e-10\\
59	1.82907295417599e-09\\
60	1.20712841249146e-10\\
61	1.69814511573783e-09\\
62	2.9275848473317e-10\\
63	1.48038953001381e-09\\
64	5.76914278631759e-10\\
65	1.23326554140576e-09\\
66	7.41520860026527e-10\\
67	1.00143698922462e-09\\
68	8.08941743120743e-10\\
69	8.1338106627614e-10\\
70	8.04492449710809e-10\\
71	6.82783309651827e-10\\
72	7.49449054419989e-10\\
73	6.11254824457386e-10\\
74	6.56212692974412e-10\\
75	5.91250660499066e-10\\
76	5.26944084127771e-10\\
77	6.05850496980949e-10\\
78	3.54864612286803e-10\\
79	6.25525603663972e-10\\
80	1.32063600672711e-10\\
81	6.01661576390697e-10\\
82	1.3223117688998e-10\\
83	4.67419939956105e-10\\
84	3.76074674119773e-10\\
85	1.70270409457569e-10\\
86	4.50824720891217e-10\\
87	2.20841725713455e-10\\
88	1.84006673268361e-10\\
89	3.7619298282637e-10\\
90	2.75857174727601e-10\\
91	4.17144749032511e-11\\
92	1.56961551992332e-10\\
93	2.57501814240493e-10\\
94	2.83454173132526e-10\\
95	2.77863361452554e-10\\
96	2.50303212202561e-10\\
97	1.26352092928978e-10\\
98	2.07542131219456e-10\\
99	8.50436349137271e-11\\
100	2.38568112607045e-09\\
};
\end{axis}

\end{tikzpicture}

    \subfigure[$\alpha = \nicefrac{1}{3}$; Legendre coefficient of the solution. The vertical dashed line indicates the selected cutoff used to discard coefficients affected by truncation error.]{\begin{tikzpicture}

\begin{axis}[%
width=0.35\columnwidth,
height=0.18\columnwidth,
at={(0\columnwidth,0\columnwidth)},
scale only axis,
xmin=0,
xmax=100,
ymode=log,
ymin=1e-10,
ymax=100,
yminorticks=true,
axis background/.style={fill=white},
]
\addplot [color=black, only marks, mark=+, mark options={solid, black}, forget plot]
  table[row sep=crcr]{%
1	32.5218249431901\\
2	43.964591203725\\
3	39.8911097787172\\
4	30.1610775718156\\
5	20.2617688050091\\
6	12.4355695851229\\
7	7.10310033504339\\
8	3.81774242022565\\
9	1.94880294093447\\
10	0.950078496033075\\
11	0.445082860129756\\
12	0.200880435855294\\
13	0.0878339347291637\\
14	0.0371789270024098\\
15	0.0153632111902325\\
16	0.00614368574389138\\
17	0.00243048525281689\\
18	0.000915575632295064\\
19	0.000356559637584932\\
20	0.000121019855356117\\
21	5.19135797183207e-05\\
22	1.17465998320985e-05\\
23	9.57468739659509e-06\\
24	1.2494269330225e-06\\
25	3.36561922764151e-06\\
26	2.0645505282175e-06\\
27	2.02074636215981e-06\\
28	1.6445606141649e-06\\
29	1.44611513292451e-06\\
30	1.24218492936196e-06\\
31	1.08520313740835e-06\\
32	9.46304329726068e-07\\
33	8.32450122609936e-07\\
34	7.32032677861057e-07\\
35	6.49141495543205e-07\\
36	5.74619761630326e-07\\
37	5.1343727455701e-07\\
38	4.57019978217448e-07\\
39	4.11277638622826e-07\\
40	3.67785538391309e-07\\
41	3.33225164692391e-07\\
42	2.99115143705126e-07\\
43	2.72794156244927e-07\\
44	2.4559243303132e-07\\
45	2.25441638111095e-07\\
46	2.0338988235736e-07\\
47	1.87930982372626e-07\\
48	1.69758825743169e-07\\
49	1.57920614400015e-07\\
50	1.42696142806205e-07\\
51	1.33692565017156e-07\\
52	1.20722200388504e-07\\
53	1.13970142701064e-07\\
54	1.02730507632854e-07\\
55	9.77933483678949e-08\\
56	8.78839948982218e-08\\
57	8.44324393386753e-08\\
58	7.55427286694012e-08\\
59	7.33271044655396e-08\\
60	6.5212614272527e-08\\
61	6.40432078588142e-08\\
62	5.6508590741841e-08\\
63	5.62414299155085e-08\\
64	4.91279200344764e-08\\
65	4.96545448193229e-08\\
66	4.28306303224183e-08\\
67	4.40705539405894e-08\\
68	3.74249505517501e-08\\
69	3.93202306021999e-08\\
70	3.27564341538765e-08\\
71	3.52676733919626e-08\\
72	2.86997049335828e-08\\
73	3.18033026273735e-08\\
74	2.51521882354847e-08\\
75	2.88383932736204e-08\\
76	2.20292513646141e-08\\
77	2.63009469772942e-08\\
78	1.92603504601314e-08\\
79	2.41325829292411e-08\\
80	1.67861428148936e-08\\
81	2.22861185309892e-08\\
82	1.45559443483162e-08\\
83	2.07237109075667e-08\\
84	1.25257828269847e-08\\
85	1.94156575018842e-08\\
86	1.06565826781082e-08\\
87	1.83394966035035e-08\\
88	8.9124804475247e-09\\
89	1.74798357838102e-08\\
90	7.25890515962276e-09\\
91	1.6828815922293e-08\\
92	5.65986205392454e-09\\
93	1.63886001367328e-08\\
94	4.07269208951306e-09\\
95	1.61779666116836e-08\\
96	2.4322053433067e-09\\
97	1.62948724191681e-08\\
98	3.83749438185622e-10\\
99	1.74167654323708e-08\\
100	0\\
};
\addplot [color=black, dashed, forget plot]
  table[row sep=crcr]{%
70	1e-10\\
70	100\\
};
\end{axis}
\end{tikzpicture}
    \subfigure[$\alpha = \nicefrac{1}{3}$; relative error with respect to the true solution~\eqref{eq:example:nonautonomous:sol2}]{\begin{tikzpicture}

\begin{axis}[%
width=0.35\columnwidth,
height=0.18\columnwidth,
at={(0\columnwidth,0\columnwidth)},
xtick={1,25,50,70,100},
xticklabels={0,0.5,1,1.5,2},
scale only axis,
xmin=0,
xmax=100,
ymode=log,
ymin=1e-11,
ymax=1e-05,
yminorticks=true,
axis background/.style={fill=white},
]
\addplot [color=black, only marks, mark=+, mark options={solid, black}, forget plot]
  table[row sep=crcr]{%
1	7.46453318356544e-06\\
2	1.23952641086449e-07\\
3	1.99586624077156e-08\\
4	1.27776195238195e-07\\
5	7.96438436987466e-08\\
6	4.98701715726887e-08\\
7	4.57229198000329e-08\\
8	5.32588576382365e-08\\
9	5.85594791380919e-08\\
10	4.89120387958834e-08\\
11	1.91939360806776e-08\\
12	1.94072271458968e-08\\
13	4.12324915757863e-08\\
14	2.69111927610493e-08\\
15	1.14925419051853e-08\\
16	3.40106848148706e-08\\
17	1.50696840211113e-08\\
18	2.0744826247383e-08\\
19	2.54621401972642e-08\\
20	6.06350273115808e-09\\
21	2.5565788909922e-08\\
22	3.78433952672073e-09\\
23	2.19701002745381e-08\\
24	8.8882095765921e-09\\
25	1.8138423813873e-08\\
26	1.08162114148542e-08\\
27	1.52754399009195e-08\\
28	1.08834254125281e-08\\
29	1.351683149495e-08\\
30	9.88023935547515e-09\\
31	1.26065191483885e-08\\
32	8.2168985657181e-09\\
33	1.21881858896027e-08\\
34	6.10711893134286e-09\\
35	1.19131683044175e-08\\
36	3.70064347143646e-09\\
37	1.14842775924665e-08\\
38	1.15712513904642e-09\\
39	1.06827530325504e-08\\
40	1.32645200716436e-09\\
41	9.39258335893622e-09\\
42	3.52649263120209e-09\\
43	7.61520566773924e-09\\
44	5.22809412570724e-09\\
45	5.4658411895945e-09\\
46	6.26934727819634e-09\\
47	3.14816059096633e-09\\
48	6.5805476342258e-09\\
49	9.07427420473145e-10\\
50	6.20164926829301e-09\\
51	1.0246212337799e-09\\
52	5.27275836781661e-09\\
53	2.47873037028307e-09\\
54	4.00087004268711e-09\\
55	3.37884594818517e-09\\
56	2.61136390810928e-09\\
57	3.7446736577054e-09\\
58	1.30338790421489e-09\\
59	3.67327083082037e-09\\
60	2.16268796281993e-10\\
61	3.30371006085031e-09\\
62	5.81443462418531e-10\\
63	2.78162842515059e-09\\
64	1.08548058632046e-09\\
65	2.23055350833852e-09\\
66	1.33613489452057e-09\\
67	1.73686811645236e-09\\
68	1.39480887609917e-09\\
69	1.3475705776038e-09\\
70	1.32392505577323e-09\\
71	1.07629908779497e-09\\
72	1.17296571142781e-09\\
73	9.13709951690347e-10\\
74	9.73365794039948e-10\\
75	8.35052795853211e-10\\
76	7.38155271784553e-10\\
77	8.05470731302463e-10\\
78	4.69150904559688e-10\\
79	7.78924437533958e-10\\
80	1.68502822617589e-10\\
81	6.98276993488039e-10\\
82	1.39180042395984e-10\\
83	5.03559273938023e-10\\
84	3.7983162145994e-10\\
85	1.73048852400177e-10\\
86	4.20449498869174e-10\\
87	1.9049005557574e-10\\
88	1.60130554045749e-10\\
89	3.00729042794065e-10\\
90	2.04519642480365e-10\\
91	2.46606614092153e-11\\
92	1.09002691498173e-10\\
93	1.64014593977713e-10\\
94	1.67553615545234e-10\\
95	1.53194671781863e-10\\
96	1.29580769122964e-10\\
97	6.40953430532404e-11\\
98	8.78246051483471e-11\\
99	2.78510295429195e-11\\
100	9.87239274397921e-10\\
};
\end{axis}
\end{tikzpicture}
    
    \caption{Numerical solution with the $\star$-Lanczos approach on the autonomous scalar test equation from Example~\ref{example:nonautonomous} and comparison with the true solution expressed in term of the generalized Hypergeometric function ($\tilde{y}_0 = 1$, $\alpha = \nicefrac{1}{2}$--first row--$\tilde{y}_0 = 1$, $\alpha = \nicefrac{1}{3}$; $M = 100$ basis function used).}
    \label{fig:hyper_geom_solution}
\end{figure}
The results are presented in Figure~\ref{fig:hyper_geom_solution}, where we observe a behavior similar to that of the autonomous case. In particular, as predicted by the theory, it is necessary to consider a number of Legendre coefficients smaller than the computed total, $M=100$, due to truncation error---we retain $k=70$ coefficients for both cases. Furthermore, the error behavior remains robust across different values of the fractional derivative order.

Another notable phenomenon observed in both cases (Figs.~\ref{fig:mitag-leffler-solution} and~\ref{fig:hyper_geom_solution}), in agreement with the general theory of FDEs, is that the region with the highest error is near the initial condition. This aligns with the well-established fact that the solution of a fractional differential equation exhibits the least regularity near the integration boundary. In this region, we are approximating a function with limited regularity using a truncated polynomial-type series expansion, which is inherently analytic, leading to a localized increase in error. A potential direction for future research is to explore alternative basis functions beyond Legendre polynomials, specifically non-polynomial bases that can more effectively capture the loss of regularity in the solution.

Having completed this construction, we can focus on solving more challenging fractional ODE systems in the next Section~\ref{sec:fractional_schrodinger}. 

\subsubsection{{Truncation error and parameter robustness}}\label{sec:exp:trunc:param}

{We briefly test the behavior of the $\star$-product approach with respect to parameters $m$ (the number of polynomial terms in the expansion~\eqref{eq:the-truncated-expansion}), $k \leq m-1$ (the cutoff used to eliminate coefficients affected by the truncation error), the value of the fractional derivative $\alpha$, and the length of the integration interval~$[0,T]$ for varying $T$.}

{Let us consider again the test case introduced in Example~\ref{example:nonautonomous}, for which an explicit solution~\eqref{eq:example:nonautonomous:sol1} is available for $\alpha = 0.5$ on the interval $[0,2]$. This setting allows us to assess the accuracy of the method in terms of the relative error as the number of terms $m$ in the series expansion increases.
\begin{figure}[htbp]
    \centering
    \definecolor{mycolor1}{rgb}{0.06600,0.44300,0.74500}%
\definecolor{mycolor2}{rgb}{0.12941,0.12941,0.12941}%
\begin{tikzpicture}

\begin{axis}[%
width=0.628\columnwidth,
height=0.271\columnwidth,
at={(0\columnwidth,0\columnwidth)},
scale only axis,
xmin=200,
xmax=4000,
xtick={200,1000,2000,4000},
xticklabels={$2\times 10^2$,$10^3$,$2\times 10^3$,$4 \times 10^3$},
xlabel style={font=\color{mycolor2}},
xlabel={m values (log-scale)},
ymode=log,
xmode=log,
log basis y=10,
log basis x=2,
ymin=1e-11,
ymax=1e-07,
yminorticks=true,
ylabel style={font=\color{mycolor2}},
ylabel={Maximum Error},
axis background/.style={fill=white},
xmajorgrids,
ymajorgrids,
yminorgrids
]
\addplot [color=black, mark=o, mark options={solid, black}, forget plot]
  table[row sep=crcr]{%
200	1.00448714235313e-07\\
400	1.25118120308859e-08\\
600	3.70278829820462e-09\\
800	1.56117518913427e-09\\
1000	7.99040833516648e-10\\
1200	4.62311966488695e-10\\
1400	2.91074719895495e-10\\
1600	1.95019112145802e-10\\
1800	1.36949119098264e-10\\
2000	9.98188199398077e-11\\
2200    7.492940e-11 \\
2400    5.773693e-11 \\
2600    4.545431e-11  \\
2800    3.637979e-11 \\
3000    2.959943e-11 \\
3200    2.433875e-11 \\
3400    2.036416e-11 \\
3600    1.713829e-11  \\
3800    1.453326e-11  \\
4000    1.249045e-11 \\
};
\end{axis}
\end{tikzpicture}%
    \caption{Error with respect to the exact solution for the test case in Example~\ref{example:nonautonomous} with respect to the number of terms $m$ in the series expansion.}
    \label{fig:m-robustness}
\end{figure}
The results are reported in Fig.~\ref{fig:m-robustness}, where $m$ ranges from $200$ to $4000$ with increments of $200$. As the solution lacks regularity at $t=0$, its expansion in Legendre polynomials is expected to exhibit only algebraic convergence with respect to $m$.}

{In Fig.~\ref{fig:robustness-alpha} we consider the solution of Eq.~\eqref{eq:fode} with $\tilde{f}(t) = -t$ and $y_0 = 1$. We vary $\alpha \in [0.1,0.9]$ to investigate the robustness of the method with respect to the fractional order. We always employ $m = 10^3$ and compute the best value of $k$ against a reference solution obtained through the Fractional-BDF2 (FBDF2) method implemented in the \texttt{flmm} code~\cite{garrappa2018numerical,MR3327641} with step $h = 10^{-5}$ (the expected accuracy is of the order $10^{-10}$).}
\begin{figure}[htbp]
    \centering

    \subfigure[{Error of the solution and the related optimal truncation $k$ (written above the data point) using $m = 10^3$, and varying $\alpha \in [0.1,0.9]$ over $10$ logarithmic spaced values.\label{fig:robustness-alpha}}]{\definecolor{mycolor1}{rgb}{0.06600,0.44300,0.74500}%
\definecolor{mycolor2}{rgb}{0.12941,0.12941,0.12941}%
\begin{tikzpicture}

\begin{axis}[%
width=0.35\columnwidth,
height=0.269\columnwidth,
at={(0\columnwidth,0\columnwidth)},
scale only axis,
clip=false,
xmin=0.07,
xmax=0.80,
xlabel style={font=\color{mycolor2}},
xlabel={$\alpha$},
ymode=log,
ymin=1.29772246862898e-11,
ymax=1.29772246862898e-07,
yminorticks=true,
ylabel style={font=\color{mycolor2}},
ylabel={Relative error w.r.t. FBDF2},
axis background/.style={fill=white},
xmajorgrids,
ymajorgrids,
yminorgrids
]
\addplot [color=mycolor1, only marks, mark=o, mark options={solid, black}, forget plot]
  table[row sep=crcr]{%
0.1	5.62415341128198e-08\\
0.129154966501488	5.11395084057398e-08\\
0.166810053720006	4.08902880535851e-08\\
0.215443469003188	2.77903282743619e-08\\
0.278255940220712	1.53021464813716e-08\\
0.359381366380463	6.39887428881911e-09\\
0.464158883361278	1.86951387703971e-09\\
0.599484250318941	3.33782335104857e-10\\
0.774263682681127	3.33342777685814e-11\\
};
\node[right, align=left, inner sep=0, rotate=90, font=\color{mycolor2}]
at (axis cs:0.08,9e-08) {952};
\node[right, align=left, inner sep=0, rotate=90, font=\color{mycolor2}]
at (axis cs:0.129,8e-08) {930};
\node[right, align=left, inner sep=0, rotate=90, font=\color{mycolor2}]
at (axis cs:0.167,7e-08) {914};
\node[right, align=left, inner sep=0, rotate=90, font=\color{mycolor2}]
at (axis cs:0.215,5e-08) {902};
\node[right, align=left, inner sep=0, rotate=90, font=\color{mycolor2}]
at (axis cs:0.278,3e-08) {894};
\node[right, align=left, inner sep=0, rotate=90, font=\color{mycolor2}]
at (axis cs:0.359,1e-08) {886};
\node[right, align=left, inner sep=0, rotate=90, font=\color{mycolor2}]
at (axis cs:0.464,3e-09) {876};
\node[right, align=left, inner sep=0, rotate=90, font=\color{mycolor2}]
at (axis cs:0.599,5e-10) {862};
\node[right, align=left, inner sep=0, rotate=90, font=\color{mycolor2}]
at (axis cs:0.774,9e-11) {824};
\end{axis}
\end{tikzpicture}
    \subfigure[{Error of the solution and the related optimal truncation $k$ (written above the data point) using $m = [10^3,4\times 10^3]$, $\alpha=0.5$, and varying the interval of integration $[0,T]$.\label{fig:robustness-T}}]{\definecolor{mycolor1}{rgb}{0.06600,0.44300,0.74500}%
\definecolor{mycolor2}{rgb}{0.12941,0.12941,0.12941}%
\begin{tikzpicture}

\begin{axis}[%
width=0.35\columnwidth,
height=0.269\columnwidth,
at={(0\columnwidth,0\columnwidth)},
scale only axis,
clip=false,
xmin=2,
xmax=10,
xlabel style={font=\color{mycolor2}},
xlabel={T},
ymode=log,
ymin=1e-09,
ymax=1e-08,
yminorticks=true,
axis background/.style={fill=white},
xmajorgrids,
ymajorgrids,
yminorgrids
]
\addplot [color=mycolor1, only marks, mark=o, mark options={solid, black}, forget plot]
  table[row sep=crcr]{%
2	1.20396248526333e-09\\
2.88888888888889	1.39491750837428e-09\\
3.77777777777778	1.87639452709322e-09\\
4.66666666666667	2.12060784984898e-09\\
5.55555555555556	2.74383290198685e-09\\
6.44444444444444	3.13560641067999e-09\\
7.33333333333333	2.71277878724503e-09\\
8.22222222222222	3.88630548828212e-09\\
9.11111111111111	4.26185214702203e-09\\
10	2.728133201449e-09\\
};
\node[right, align=left, inner sep=0, rotate=90, font=\color{mycolor2}]
at (axis cs:2,1.40396248526333e-09) {872};
\node[right, align=left, inner sep=0, rotate=90, font=\color{mycolor2}]
at (axis cs:2.889,1.59491750837428e-09) {1133};
\node[right, align=left, inner sep=0, rotate=90, font=\color{mycolor2}]
at (axis cs:3.778,2.17639452709322e-09) {1467};
\node[right, align=left, inner sep=0, rotate=90, font=\color{mycolor2}]
at (axis cs:4.667,2.52060784984898e-09) {1800};
\node[right, align=left, inner sep=0, rotate=90, font=\color{mycolor2}]
at (axis cs:5.556,3.14383290198685e-09) {2133};
\node[right, align=left, inner sep=0, rotate=90, font=\color{mycolor2}]
at (axis cs:6.444,3.53560641067999e-09) {2467};
\node[right, align=left, inner sep=0, rotate=90, font=\color{mycolor2}]
at (axis cs:7.333,3.21277878724503e-09) {2800};
\node[right, align=left, inner sep=0, rotate=90, font=\color{mycolor2}]
at (axis cs:8.222,4.18630548828212e-09) {3133};
\node[right, align=left, inner sep=0, rotate=90, font=\color{mycolor2}]
at (axis cs:9.111,4.56185214702203e-09) {3467};
\node[right, align=left, inner sep=0, rotate=90, font=\color{mycolor2}]
at (axis cs:10,3.128133201449e-09) {3800};
\end{axis}
\end{tikzpicture}
    
    \caption{{Robustness of the solution with respect to different parameters defining the method (expansion terms $m$ and truncation parameter $k$) and the one defining the problem (order of the fractional derivative $\alpha$, and length of the time interval $T$).}}
    \label{fig:robustness}
\end{figure}
{We observe that the error decreases with growing $\alpha$. This corresponds to the well-known regularity behavior of the solution with respect to the value of $\alpha$: lower values of $\alpha$ always correspond to a solution with lower regularity; hence, we typically need more terms in the polynomial expansion to achieve a better approximation.}

{In Fig.~\ref{fig:robustness-T}, we consider the same test problem with $\alpha = 0.5$ while expanding the integration interval $[0,T]$. Specifically, we select ten consecutive values of $T$ uniformly spaced between $2$ and $10$. Correspondingly, for each value of $T$, we increase $m$ by choosing ten uniformly spaced values in the range $m \in [10^3,\, 4 \times 10^3]$. Again, we compute the best value of $k$ against the FBDF2 reference solution with step $h = 10^{-5}$.}

{For all intervals $[0,T]$, the method preserves an error comparable to that observed on $[0,2]$, namely of order $10^{-9}$. As the integration interval increases, larger values of $(k,m)$, as well as higher truncation levels and more terms in the series expansion, are required to maintain this level of accuracy. This example suggests that the required $m$ value is linearly scaling with the size of the interval.}

\subsection{\bf Time-Fractional Schr\"{o}dinger equation}\label{sec:fractional_schrodinger}

The time-fractional Schr\"{o}dinger equation~\cite{PhysRevE.62.3135,MR1755089,PhysRevE.66.056108,MR3671992} is a generalization of the classical Schr\"{o}dinger equation, incorporating a fractional time derivative to model systems with anomalous diffusion and memory effects. Specifically, the time derivative is replaced by the Caputo derivative of order \(\alpha \in (0,1]\), allowing for a more flexible description of non-local dynamics in quantum systems. The equation for a continuous medium typically takes the form:
\begin{equation}\label{eq:time-frac-schroedinger}
i^\alpha \hbar_\alpha \psi^{(\alpha)}(x,t) = -\frac{\hbar^2}{2\hat{m}} \Delta \psi(x,t) + V(x,t) \psi(x,t),
\end{equation}
where \(\psi(x,t)\) is the wave function, \(V(x,t)\) is the potential, \(\hbar\) is the reduced Planck's constant, \(\hat{m}\) is the particle mass, and $\hbar_\alpha$ is a scaling coefficient with physical dimension \si{erg.sec^\alpha}; we refer back to~\cite{MR3671992} and~\cite{10.1063/1.1769611} for details on the interpretation. 

To formulate the FDE as a system—whether autonomous or not—we employ a semi-discretization of the spatial variables using the finite element method. In all subsequent examples, we consider a square domain $\Omega$ discretized with a triangular mesh, as illustrated in Fig.~\ref{fig:domain_and_mesh:domain}. We utilize quadratic Lagrangian elements, meaning that each triangular element has nodes at its vertices and edge midpoints. Dirichlet boundary conditions are enforced using either the \emph{stiff-spring} method, which penalizes the degrees of freedom associated with the boundary conditions for the time-{independent} case, or the subspace method for the time-dependent potential--{i.e.}, the corresponding degrees of freedom are removed from the equation. 
\begin{figure}[htbp]
    \centering
    \subfigure[\label{fig:domain_and_mesh:domain}]{\input{domainandmesh}}\hspace{0.022\columnwidth}
    \subfigure[\label{fig:domain_and_mesh:stiff}]{\includegraphics[width=0.247\columnwidth]{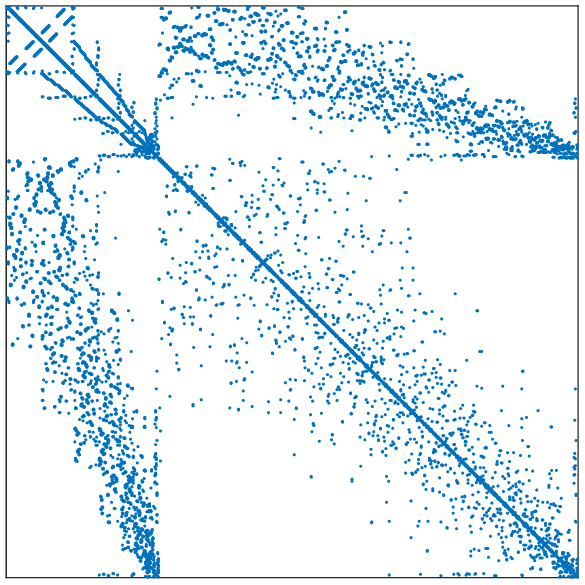}}
    
    \caption{Domain, mesh and pattern of the stiffness matrix for the FEM discretization of the~\eqref{eq:time-frac-schroedinger} equation on a square domain with a maximum size of the element of $h_{\max} = 0.3$.}
    \label{fig:domain_and_mesh}
\end{figure}
With this approach, we obtain a system of FDEs of the form  
\[
\begin{cases} 
\mathsf{M}_N \boldsymbol{\psi}^{(\alpha)}(t) = \mathsf{K}_N(t) \boldsymbol{\psi}(t), & t \in (0,T], \\ 
\boldsymbol{\psi}(0) = \boldsymbol{\psi}_0, 
\end{cases} 
\]  
which can then be solved using the procedure based on the solution formulation presented in~\eqref{eq:star-solution2}; The structure of the two mass matrices, $\mathsf{M}_N$ and $\mathsf{K}_N(t)$, is depicted in Fig.~\ref{fig:domain_and_mesh:stiff}. For the chosen problem formulation, the time-dependent terms in $K(t)$ appears as a single time-dependent function multiplying a matrix.

\subsubsection{Time-independent potential}\label{sec:time-indepedent}
As a first numerical experiment, we consider the test case from~\cite{MR3689930}, which involves a variation of~\eqref{eq:time-frac-schroedinger} defined on a two-dimensional square domain \(\Omega = [-2, 2] \times [-2, 2]\). The potential \(V(x)\) is set to zero within the interior of the subdomain \(\Omega_p = [-1, 1] \times [-1, 1]\) and is constant outside, specifically \(V(x) = 0\) for \(x \in \mathrm{int}(\Omega_p)\) and \(V(x) = 10\) for \(x \in \mathrm{cl}(\Omega / \Omega_p)\). The initial wavefunction at time \(t = 0\) is chosen to be of Gaussian type, given by \(\psi(0, x) = e^{-|x|^2 / 2}\). For simplicity, all constants are normalized as \(\hbar = \hbar_{\alpha} = \hat{m} = 1\).
\begin{figure}[htbp]
    \centering
    \input{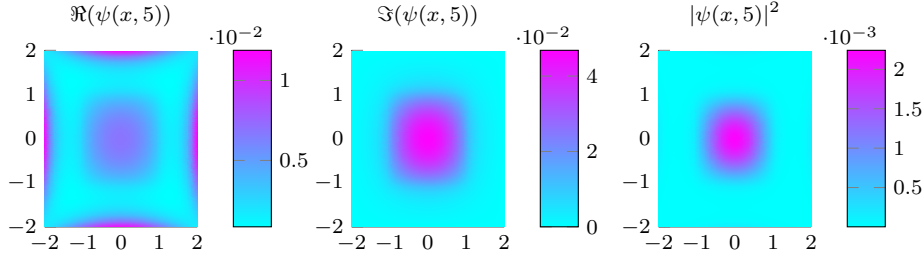}
    
    \caption{Solution of the test problem in Section~\ref{sec:time-indepedent} for $\alpha = 0.8$ and employing the Fractional Linear Multistep Method based on BDF2 formula with $\Delta t = 10^{-2}$.}
    \label{fig:schrodinger-fractional-time-independent}
\end{figure}
Figure~\ref{fig:schrodinger-fractional-time-independent} illustrates a solution to the problem for $\alpha = 0.8$ at $T = 5$.  

In Table~\ref{tab:star_time_indepedent}, we compare the solution obtained using the $\star$-approach, as described in Theorem~\ref{thm:star-solution}, with the solution computed via the FBDF2 method implemented in the \texttt{flmm} code~\cite{garrappa2018numerical,MR3327641}. 
\begin{table}[htbp]
    \centering
    \setlength{\tabcolsep}{4pt}
    \begin{tabular}{cccccc|cccc}
    \multicolumn{6}{c}{$\star$-approach} & \multicolumn{4}{c}{Fractional BDF2 (via \texttt{flmm2})} \\
    \toprule
    $m$ & $k$ & Abs.Err. & Rel.Err. & $\operatorname{rk}(\mathsf{X})$ & T (\si{s}) & $\Delta t$ & T (\si{s}) & Abs.Err. & Rel.Err. \\
    \midrule
 100 & 34 & 3.45e-03 & 9.17e-04 & 19 & 1.78 & $10^{-1}$ & 0.79 & 1.19e-02 & 3.16e-03 \\
 500 & 116 & 2.84e-04 & 7.55e-05 & 23 & 6.18 & $10^{-2}$ & 4.61 & 2.46e-04 & 6.54e-05  \\
 1000 & 196 & 9.94e-05 & 2.65e-05 & 24 & 17.27 & $10^{-3}$ & 44.41 & 4.07e-06 & 1.08e-06 \\
 1500 & 268 & 4.91e-05 & 1.31e-05 & 24 & 33.50 & $10^{-4}$ & 436.16 & 5.12e-08 & 1.36e-08  \\
 2000 & 332 & 2.85e-05 & 7.57e-06 & 24 & 60.30 \\
 \midrule
    \end{tabular}
    \caption{Solution of the discretized Schrodinger with time-independent potential and $\alpha = 0.5$. The maximum size of the mesh triangles is set to $0.3$. The reference solution, with respect to which both absolute and relative error are computed, is obtained with the Fractional BDF2 approach implemented in the \texttt{flmm2} routine and $\Delta t = 10^{-5}$.}
    \label{tab:star_time_indepedent}
\end{table}
To provide a reference solution, we employ the FBDF2 method with a fine time step $\Delta t = 10^{-5}$. To mitigate the computational cost and memory usage associated with FBDF2, we restrict the final time to $T = 1${,} as with any fractional linear multistep method, the entire integration history must be stored. For the $\star$-approach, we vary the number of Legendre expansion coefficients $m$ from 100 to 2000. In contrast, for the FBDF2 method, we decrease the time step~$\Delta t$ from~$10^{-1}$ to~$10^{-4}$. 

As in previous examples, we denote by $k$ the number of retained coefficients. Comparing the runtimes of the two methods, we observe that the $\star$-approach has a comparable running time for comparable accuracies, while aiming for higher accuracy with the FBDF2 approach is significantly more expensive. Additionally, it offers the advantage that the final solution is compactly represented by only~$k$ coefficients, rather than a dense matrix of size proportional to the number of spatial degrees of freedom and $\nicefrac{1}{\Delta t}$. In this case, nevertheless the time-stepping is still more efficient for this small case. When we refine the spatial mesh setting the maximum size of the mesh triangle to $0.1$, as reported in Table~\ref{tab:refined_case}, we observe a clear advantage with respect to the \texttt{flmm2} approach.
\begin{table}[htbp]
    \centering
    \setlength{\tabcolsep}{4pt}
    \begin{tabular}{cccccc|cccc}
    \multicolumn{6}{c}{$\star$-approach} & \multicolumn{4}{c}{Fractional BDF2 (via \texttt{flmm2})} \\
    \toprule
    $m$ & $k$ & Abs.Err. & Rel.Err. & $\operatorname{rk}(\mathsf{X})$ & T (\si{s}) & $\Delta t$ & T (\si{s}) & Abs.Err. & Rel.Err. \\
    \midrule
 100 & 34 & 3.05e-02 & 2.72e-03 & 13 & 27.39 & $10^{-1}$ & 101.37 & 3.69e-02 & 3.29e-03 \\ 
 500 & 116 & 6.03e-04 & 5.38e-05 & 21 & 141.23 & $10^{-2}$ & 559.08 & 7.62e-04 & 6.80e-05 \\
 1000 & 196 & 1.93e-04 & 1.72e-05 & 23 & 297.29 & $10^{-3}$ & 5108.31 & 1.26e-05 & 1.12e-06 \\
 1500 & 266 & 1.02e-04 & 9.07e-06 & 23 & 482.68 & $10^{-4}$ & 50651.23 & 1.59e-07 & 1.42e-08 \\
 2000 & 330 & 6.50e-05 & 5.80e-06 & 24 & 666.72  \\    
    \midrule
    \end{tabular}
    \caption{Solution of the discretized Schrodinger with time-independent potential and $\alpha = 0.5$. The maximum size of the mesh triangles is set to $0.1$. The reference solution, with respect to which both absolute and relative error are computed, is obtained with the Fractional BDF2 approach implemented in the \texttt{flmm2} routine and $\Delta t = 10^{-5}$.}
    \label{tab:refined_case}
\end{table}
This is due both to the increased cost in the solution of the linear systems inside the time-stepping procedure and to the increase in memory consumption for the sum of the integration queue. For both levels of refinement presented in Tables~\ref{tab:star_time_indepedent} and~\ref{tab:refined_case}, corresponding to the finite element discretization, we observe that the rank of the matrix $\mathsf{X}$ in equation~\eqref{eq:matrix_equation_complete} is substantially lower than the number of coefficients $m$ in the chosen basis expansion. This observation indicates that the solution of the reduced system~\eqref{eq:matrix_eq_projected} can be obtained to the desired tolerance with a relatively small value of $j$, which corresponds to the rank of $\mathsf{X}$. Consequently, the procedure is computationally efficient from this perspective.

\subsubsection{Time-dependent potential}\label{sec:time-dependent}

For this case we consider the problem~\eqref{eq:time-frac-schroedinger} with a potential $V(x,t)$ that actually depends on time in order to use the representation of the solution described in Corollary~\ref{cor:nonautonomous_system}. Specifically, we consider the same spatial domain of Section~\ref{sec:time-indepedent} and modify the potential {as} %
\[
V_2(x,t) =  V_1(x,y) + \frac{1}{2}\left( 1 + 0.1 \sin(5 \pi^2 t) \right),
\]
for $V(x)$ the potential which is set to zero within the interior of the subdomain \(\Omega_p = [-1, 1] \times [-1, 1]\) and is constant outside, specifically \(V(x) = 0\) for \(x \in \mathrm{int}(\Omega_p)\) and \(V(x) = 10\) for \(x \in \mathrm{cl}(\Omega / \Omega_p)\).
\begin{table}[htbp]
    \centering
    \setlength{\tabcolsep}{4pt}
    \begin{tabular}{cccccc|cccc}
    \multicolumn{6}{c}{$\star$-approach} & \multicolumn{4}{c}{Fractional BDF2 (via \texttt{flmm2})} \\
    \toprule
    $m$ & $k$ & Abs.Err. & Rel.Err. & $\operatorname{rk}(\mathsf{X})$ & T (\si{s}) & $\Delta t$ & T (\si{s}) & Abs.Err. & Rel.Err. \\
    \midrule
100	&	30	&	6.98e-03	&	2.08e-03	&	16	&	2.53 &  $10^{-1}$& 2.81 & 1.12e-02 & 3.33e-03 \\
500	&	92	&	1.54e-03	&	4.58e-04	&	18	&	3.73 & $10^{-2}$ & 11.70 & 6.78e-04 & 2.02e-04 \\
1000	&	154	&	7.51e-04	&	2.24e-04	&	19	&	5.59 &  $10^{-3}$& 103.52 & 2.29e-05 & 6.81e-06 \\
2000	&	256	&	3.63e-04	&	1.08e-04	&	19	&	22.58 &  $10^{-4}$ & 1014.80 & 9.13e-07 & 2.72e-07 \\
2500	&	302	&	2.88e-04	&	8.57e-05	&	19	&	36.00 \\
3000 & 346 & 2.38e-04 & 7.10e-05 & 19 & 56.12 \\
3500 & 388 & 2.04e-04 & 6.06e-05 & 19 & 75.18 \\
    \bottomrule
    \end{tabular}
    \caption{Solution of the discretized Schrodinger with time-dependent potential and $\alpha = 0.3$. The maximum size of the mesh triangles is set to $0.3$. The reference solution, with respect to which both absolute and relative error are computed, is obtained with the Fractional BDF2 approach implemented in the \texttt{flmm2} routine and $\Delta t = 10^{-5}$.}
    \label{tab:time_dependent_unrefined}
\end{table}
The effectiveness of the $\star$‑approach in handling time-dependent potentials is further examined on both coarse and fine spatial meshes, with results summarized in Tables~\ref{tab:time_dependent_unrefined} and~\ref{tab:time_dependent_refined_case}. The fractional order is fixed at $\alpha = 0.3$ in all cases. On the coarser mesh with a maximum triangle size of $h_{\max} = 0.3$, the $\star$‑approach achieves relative errors decreasing from $2.08 \times 10^{-3}$ to $8.57 \times 10^{-5}$ as the number of basis functions $m$ increases from 100 to 2500. The rank of the solution matrix $\mathsf{X}$ remains stable, growing slowly from 16 to 19, while computational time increases moderately from approximately 2.5 seconds to 36 seconds. In contrast, the FBDF2 method implemented via the \texttt{flmm2} routine requires over 1000 seconds to reach a relative error of $2.72 \times 10^{-7}$ for $\Delta t = 10^{-4}$, showing that comparable or superior accuracy can be obtained with significantly lower computational effort using the $\star$‑approach. For example, with $m = 500$, the $\star$‑approach attains a relative error of $4.58 \times 10^{-4}$ in only 3.73 seconds, while FBDF2 with $\Delta t = 10^{-3}$ needs 11.7 seconds to achieve $2.04 \times 10^{-4}$.
\begin{table}[htbp]
    \centering
    \setlength{\tabcolsep}{4pt}
    \begin{tabular}{cccccc|cccc}
    \multicolumn{6}{c}{$\star$-approach} & \multicolumn{4}{c}{Fractional BDF2 (via \texttt{flmm2})} \\
    \toprule
    $m$ & $k$ & Abs.Err. & Rel.Err. & $\operatorname{rk}(\mathsf{X})$ & T (\si{s}) & $\Delta t$ & T (\si{s}) & Abs.Err. & Rel.Err. \\
    \midrule
100	&	30	&	2.16e-02	&	2.20e-03	&	13	&	61.75 &  $10^{-1}$ & 502.99 & 3.31e-02 & 3.37e-03 \\
500	&	92	&	4.80e-03	&	4.88e-04	&	15	&	65.92 &  $10^{-2}$ & 1358.86 & 2.04e-03 & 2.08e-04 \\
1000	&	154	&	2.42e-03	&	2.46e-04	&	16	&	67.84  &  $10^{-3}$ & 9713.20 & 7.46e-05 & 7.59e-06 \\
1500	&	206	&	1.63e-03	&	1.66e-04	&	16	&	64.54 &  $10^{-4}$ & 94326.94 & 3.44e-06 & 3.50e-07 \\
2000 & 256 & 1.25e-03 & 1.27e-04  & 17 & 61.48 \\
2500 & 300 & 1.02e-03 & 1.04e-04  & 17 & 75.42 \\
3000 & 344 & 8.70e-04 & 8.85e-05  & 17 & 116.13 \\
3500 & 384 & 7.64e-04 & 7.77e-05  & 18 & 133.50 \\
    \midrule
    \end{tabular}
    \caption{Solution of the discretized Schrodinger with time-dependent potential and $\alpha = 0.3$. The maximum size of the mesh triangles is set to $0.1$. The reference solution, with respect to which both absolute and relative error are computed, is obtained with the Fractional BDF2 approach implemented in the \texttt{flmm2} routine and $\Delta t = 10^{-5}$.}
    \label{tab:time_dependent_refined_case}
\end{table}
On the finer mesh with $h_{\max} = 0.1$, the performance trends persist. The $\star$‑approach produces relative errors ranging from $2.20 \times 10^{-3}$ to $1.04 \times 10^{-4}$ as $m$ increases from 100 to 2500, with the numerical rank of $\mathsf{X}$ remaining around 13–16 for $m \leq 1500$. At $m = 2000$, the $\star$‑approach achieves a relative error of $1.27 \times 10^{-4}$ within approximately 62 seconds, whereas FBDF2 at a moderate comparable accuracy of $4.88 \times 10^{-4}$ requires nearly 23 minutes with $\Delta t = 10^{-2}$, and a similar behavior is also observed in the time ratio when the error is of the order of $10^{-5}$. These results emphasize the efficiency and scalability of the proposed method. The computational advantage becomes increasingly pronounced as the spatial mesh is refined. Moreover, the ability to maintain a low-rank representation throughout the simulation demonstrates the method’s effectiveness in capturing the essential dynamics of time-fractional Schr\"{o}dinger equations with non-autonomous potentials.

\section{Conclusions}\label{sec:conclusions}
\setcounter{section}{5} \setcounter{equation}{0} %

In this work{,} we have developed a novel algebraic framework based on the $\star$‐product for the solution of linear nonautonomous fractional differential equations FDEs. By reformulating the Caputo FDE in terms of two‐variable distributions, we derived a compact representation of the solution as a $\star$‐resolvent. This reformulation together with the path‐sum combinatorics technique allowed us to express as series in generalized hypergeometric and Mittag–Leffler functions {as} the solution of some systems of nonautonomous FDEs.

To render the approach computationally viable, we proposed a ``solve-then-discretize'' strategy: all distributions are expanded in a truncated Legendre basis and $\star$‐operations are mapped to finite matrix algebra, reducing the problem to a single linear system. Numerical experiments on scalar autonomous and nonautonomous examples, as well as on the time‐fractional Schr\"{o}dinger equation, demonstrate that the $\star$‐method attains competitive accuracy and efficiency compared to classical fractional linear multistep methods, while offering a sparse spectral representation of the solution; in particular, for moderate spatial discretizations the $\star$‐approach outperforms FBDF2 in both runtime and memory usage, and its Krylov–Schur variant scales favorably for large systems.

Several avenues for future research emerge naturally: a rigorous error analysis of the Legendre truncation and adaptive strategies for selecting the basis size and cutoff could enhance robustness; alternative non‐polynomial bases, such as fractional or multiscale wavelets, may better capture solution regularity near $t=0$; the development of tailored preconditioners and low‐rank iterative methods for the block‐structured matrix equations would enable large‐scale space–time fractional PDE applications.

\begin{acknowledgements}
 F.D. is a member of the INdAM GNCS group, and his research was partially granted by the 
 Italian Ministry of University and Research (MUR) through the PRIN 2022 ``MOLE: Manifold 
 constrained Optimization and LEarning'',  code: 2022ZK5ME7 MUR D.D. financing decree 
 n. 20428 of November 6th, 2024 (CUP B53C24006410006).  P.-L.G. and S.P. work was funded by the French National Research Agency project \textsc{Magica} ANR-20-CE29-0007. 
 S.P. work was also supported by Charles University Research programs PRIMUS/21/SCI/009 and UNCE24/SCI/005.
 F.D. acknowledges the MUR Excellence Department Project awarded to the Department of Mathematics, University of Pisa, CUP I57G22000700001.

 The authors are grateful to the two anonymous referees for their thorough reading of the manuscript and for their helpful comments. Their suggestions have contributed to improving the clarity and presentation of the paper.
 \end{acknowledgements}

 \section*{\small
 Conflict of interest} %
 {\small
 The authors declare that they have no conflict of interest.}

\setcounter{section}{0}
\renewcommand*{\thesection}{\Alph{section}.}
\section{Appendix: path-sum expressions for system \ref{SysExample}}\label{AppA}
We have shown in the main text that $\mathsf{G}_{11}=\big(1_\star - \Theta^{\star \alpha})^{\star -1}\star (1_\star - t\Theta^{\star \alpha})^{\star -1}$. Given the results of Examples~\ref{example:mlfunction} and \ref{example:nonautonomous} we have
\begin{align*}
(1_\star - t\Theta^{\star \alpha})^{\star -1}\star \Theta^{\star(1-\alpha)}&=\frac{1}{\Gamma
   \left(\frac{1}{\alpha +1}\right)}\sum_{k=0}^\infty \frac{\Gamma \left(k-\frac{\alpha }{\alpha +1}
   \right) }{ \Gamma (k+k\alpha- \alpha )}(\alpha +1)^{k-1}\,t^{\alpha  (k-1)+k},\\
\Theta^{\star \alpha}\star (1_\star - \Theta^{\star \alpha})^{\star -1}&=(t-s)^{\alpha -1} E_{\alpha ,\alpha }\big((t-s)^{\alpha
   }\big),
\end{align*}
where $E_{\alpha ,\beta }(x)$ {is the generalized Mittag-Leffler function~\eqref{eq:mittag-leffler-definition}.} %
Recall that $\mathsf{U}_{11}=\Theta^{\star \alpha}\star \mathsf{G}_{11}\star \Theta^{\star(1-\alpha)}$. Combining this with the above leads to the expression for $\mathsf{U}_{11}$ given in the main text.

We proceed similarly for $\mathsf{U}_{22}=\Theta^{\star \alpha}\star \mathsf{G}_{22}\star \Theta^{\star(1-\alpha)}$, starting with the path-formulation of $\mathsf{G}_{22}$ in terms of simple cycles on the graph $\mathcal{G}$,
\begin{align*}
\mathsf{G}_{22}&=\Big(1_\star  -\underbrace{(1\Theta^{\star \alpha})\star (1_\star - \overbrace{(1+t)\Theta^{\star \alpha}}^{\text{Loop } 1\leftarrow 1}\,)^{\star-1}\star(-t\Theta^{\star \alpha}}_{\text{Backtrack }2\leftarrow 1\leftarrow 2})\Big)^{\star -1}.
\end{align*}
Evaluating now the $\star$-Neumann series for the inner and outer resolvents leads~to
$$
\mathsf{U}_{22}=1-\frac{1}{\Gamma
   \left(\frac{1}{\alpha +1}\right)}\sum_{k=1}^\infty\sum_{m=0}^\infty \frac{(\alpha +1)^k\, \Gamma \left(k+\frac{1}{\alpha +1}\right) }{ \Gamma (k+(k+m+1) \alpha +1)}t^{\alpha  (k+m+1)+k}.
$$
The path-sum formulation for off-diagonal terms involves the simple paths of $\mathcal{G}$, for example
$$
\mathsf{G}_{21}=\overbrace{\underbrace{(1\Theta^{\star \alpha})}_{\text{Edge } 2\leftarrow 1}\,\,\,\star \underbrace{\,\mathsf{G}_{11}\,}_{\text{Cycles }1\leftarrow 1}}^{\text{Path } 2\leftarrow 1}\,,
$$
and
$$
\mathsf{G}_{12}=\underbrace{\overbrace{\big(1_\star -(1+t)\Theta^{\star \alpha}\big)^{\star -1}}^{\text{Cycles } 1\leftarrow 1 \text{ on } \mathcal{G}\backslash\{2\}}\star \overbrace{(1\Theta^{\star \alpha})}^{\text{Edge }1\leftarrow 2}\,\,\star\! \!\overbrace{\,\mathsf{G}_{22}\,}^{\text{Cycles } 2\leftarrow 2}}_{\text{Path } 1\leftarrow 2}.
$$
Calculations of the $\star$-products and resolvents finally give
\begin{align*}
\mathsf{U}_{21}&=\frac{1}{\Gamma \left(\frac{1}{\alpha
   +1}\right)}\sum_{k,m=0}^\infty \frac{(\alpha +1)^k\, \Gamma \left(k+\frac{1}{\alpha +1}\right)
   }{ \Gamma (k+(k+m+1) \alpha +1)}t^{\alpha  (k+m+1)+k},\\
\mathsf{U}_{12}&=-\frac{1}{\Gamma
   \left(\frac{1}{\alpha +1}\right)}\sum_{k=1}^\infty\sum_{m=0}^\infty\frac{(\alpha +1)^k\, \Gamma \left(k+\frac{1}{\alpha +1}\right) }{ \Gamma (k+(k+m) \alpha +1)}t^{\alpha  (k+m)+k}.
\end{align*}

\section{Coefficient matrix computation}\label{app:coeff}
We aim at computing $\mathsf{F}_m$, the coefficient matrix \eqref{eq:coeff:mtx} of the distribution $f(t,s):=\tilde{f}(t)(\Theta(t-s))^{\star \alpha}$.
Note that
\[ f(t,s) = \tilde{f}(t)\delta(t-s) \star \Theta(t-s)^{\star \alpha}, \]
At the end of Section~\ref{sec:discretization}, we explained how to compute $\mathsf{H}_m^\alpha$, the coefficient matrix of $\Theta(t-s)^{\star \alpha}$.
Hence, if we can compute the coefficient matrix $\mathsf{F}_\delta$ of $\tilde{f}(t)\delta(t-s)$, we can approximate $\mathsf{F}_m$ by matrix multiplication:
\[ \mathsf{F}_m \approx \mathsf{F}_\delta  \mathsf{H}_m^\alpha.  \]
Let us first expand $\tilde{f}(t)$ into the Legendre polynomial basis, i.e., $$\tilde{f}(t) =  \sum_{j=0}^\infty \beta_j P_j(t).$$
Then the $\mathsf{F}_\delta$ elements $f_{k,\ell}$ are given by
\begin{align*}
   f_{k,\ell} &= \iint_{I\times I} f(\tau,\sigma) P_k(\tau) P_\ell(\sigma) \; \mathrm{d} \tau \; \mathrm{d} \sigma \\
   &= \iint_{I\times I} \tilde{f}(\tau)\delta(\tau-\sigma) P_k(\tau) P_\ell(\sigma) \; \mathrm{d} \tau \; \mathrm{d} \sigma \\
   &= \int_{I} \tilde{f}(\sigma) P_k(\sigma) P_\ell(\sigma) \; \mathrm{d} \sigma = \sum_{j=0}^\infty \beta_j \underbrace{\int_{I} P_j(\sigma) P_k(\sigma) P_\ell(\sigma) \; \mathrm{d} \sigma}_{\mathcal{F}_{jk\ell}}\\
   &= \sum_{j=0}^\infty \beta_j \mathcal{F}_{jk\ell}.
\end{align*}
Without loss of generality, assume that the interval is $I=[-1,1]$. 
Then $\mathcal{F}_{jk\ell}$ can be computed by the method described in \cite{pozza2023new}, while the expansion of $\tilde{f}(t)$ can be truncated and the related coefficients $\beta_j$ can be approximated by the MATLAB package \verb|chebfun|  \cite{chebfun}.

\section{Iterative algorithm for the matrix-equation \eqref{eq:time_dependent_mex}}
\label{sec:solution_alg_details}

We seek a low-rank approximation of the solution of the multi-term matrix equation~\eqref{eq:time_dependent_mex}
\[
  \mathsf{X}
  \;-\;
  \mathsf{H}_m^\alpha\,\mathsf{X}\,\mathsf{K}^\top
  \;-\;
  \mathsf{F}_m\,\mathsf{X}\,\mathsf{L}^\top
  \;=\;
  \phi_m^{(\alpha,0)}\,u_0^\top.
\]
To obtain a simple solution procedure, we rewrite it in the fixed‐point form:
\begin{equation*}
  \mathsf{X}_{k+1}
  \;-\;
  \mathsf{H}_m^\alpha\,\mathsf{X}_{k+1}\,\mathsf{K}^\top
  \;=\;
  \mathsf{F}_m\,\mathsf{X}_{k}\,\mathsf{L}^\top
  \;+\;
  \phi_m^{(\alpha,0)}\,u_0^\top, \;\;\; k = 0, 1, \dots, \;\;\; \mathsf{X}_0 = 0.
\end{equation*}
In our tests, we have observed numerically that the iterates $\mathsf{X}_k$ can be approximated by a low-rank matrix: $\mathsf{X}_k \approx \mathsf{L}_k \mathsf{R}_k^\top$. We denote with $s(k)$, the number of columns of $\mathsf{L}_k$ and $\mathsf{R}_k$.  
Then, we can approximate $\mathsf{X}_{k+1}$ by solving the matrix equation
\begin{align}\label{eq:mtx:nonaut:it}
  \mathsf{X}_{k+1}
  -
  \mathsf{H}_m^\alpha\,\mathsf{X}_{k+1}\,\mathsf{K}^\top
  =
  \mathsf{B}_k \mathsf{C}_k^\top, 
  \;\;
  \mathsf{B}_k = [\mathsf{F}_m\,\mathsf{L}_{k}, \, \phi_m^{(\alpha,0)}], 
  \;\;
  \mathsf{C}_k := [\mathsf{L}\, \mathsf{R}_k, \, u_0].
\end{align}
In practice, we approximate $\mathsf{B}_k$ and $\mathsf{C}_k$ using lower-rank matrices $\hat{\mathsf{B}}_k$ and $\hat{\mathsf{C}}_k$.
Since this latter matrix equation has two terms and a low-rank right-hand side, we can now rely on a Krylov subspace approach for its solution; see, e.g., \cite{Sim16}. Let us consider the block Krylov subspace
\[ \mathcal{K}_q (\mathsf{K}, \hat{\mathsf{C}}_k) : = \text{span} \{\mathsf{K}\,\hat{\mathsf{C}}_k, \mathsf{K}^2\,\hat{\mathsf{C}}_k, \dots, \mathsf{K}^{q-1}\,\hat{\mathsf{C}}_k \}, \]
and assume that its dimension is maximal, i.e., $q \cdot s(k)$ (this has proven to be true in our numerical tests). By running the \emph{block Arnoldi algorithm} with reorthogonalization (e.g., \cite{saad03}) we obtain the orthogonal matrix $\mathsf{V}_q$, basis of $\mathcal{K}_q (\mathsf{K}, \hat{\mathsf{C}}_k)$. We then approximate the solution of \eqref{eq:mtx:nonaut:it} as $\mathsf{X}^\top \approx \mathsf{V}_q \mathsf{Y}$. Consequently, we get the equation 
\[
     \mathsf{V}_q \mathsf{Y}
  \;-\;
  \mathsf{K} \,\mathsf{V}_q \mathsf{Y}\, (\mathsf{H}_m^\alpha)^\top
  \;=\;
  \hat{\mathsf{C}}_k \hat{\mathsf{B}}_k^\top,
\]
and multiplying by $\mathsf{V}_q^H$ we get the reduced order equation
\[
     \mathsf{Y}
  \;-\;
  \mathsf{V}_q^H \mathsf{K} \,\mathsf{V}_q \mathsf{Y}\, (\mathsf{H}_m^\alpha)^\top
  \;=\;
  (\mathsf{V}_q^H \hat{\mathsf{C}}_k) \hat{\mathsf{B}}_k^\top.
\]
Note that the matrix $\mathsf{J}_q = \mathsf{V}_q^H \mathsf{K} \,\mathsf{V}_q$ has a much smaller size than $\mathsf{K}$ and is an output of the Arnoldi algorithm. Now that the equation has a smaller size, it can be solved by a standard explicit method, e.g., by the Schur decomposition approach (e.g., \cite{Sim16}). We remark that the Schur decomposition of $\mathsf{H}_m$ can be computed once for all and stored.
As a stopping criterion for the iteration, we consider the difference of two consecutive iterates $\mathsf{L}_{k+1} \mathsf{R}_{k+1}^\top - \mathsf{L}_{k} \mathsf{R}_{k}k^\top$ and we multiply it from the left by $|\phi_m^{(\alpha,0)}|^\top$ (the absolute value is applied componentwise) and from the right by $u_0$ obtaining the residual norm estimates
\[\left| (|\phi_m^{(\alpha,0)}|^\top \mathsf{L}_{k+1})(\mathsf{R}_{k+1}^\top u_0) - (|\phi_m^{(\alpha,0)}|^\top \mathsf{L}_{k})(\mathsf{R}_{k}^\top u_0)  \right|{,}\]
which is much cheaper than computing the norm of the difference of the two consecutive iterates. The choice of the two vectors is a heuristic based on the fact that $u_0^\top \tilde{u}(T) \approx |\phi_m(0)^\top| \mathsf{H}_m^\alpha \mathsf{X} u_0$. 
Overall, we get Algorithm \ref{algo:it:arn}.
\begin{algo}\label{algo:it:arn}
{Low-Rank Iterative Scheme with Block Arnoldi}
\begin{algorithmic}[1]

\State \textbf{Inputs:}
\State \quad $\mathsf{F}_m$, $\mathsf{H}_m^\alpha$, $\mathsf{L}$, $\mathsf{K}$, $\phi_m^{(\alpha,0)}$, $u_0$ \Comment{Matrix equation data}
\State \quad $tol$ \Comment{Tolerance for stopping criterion}

\State \textbf{Outputs:}
\State \quad $\mathsf{L}_k$, $\mathsf{R}_k$ \Comment{Factors of the approximated solution}

\Statex

\State \textbf{Initialize:}
\State \quad $\mathsf{L}_0 \gets 0$, \quad $\mathsf{R}_0 \gets 0$, \quad $k \gets 0$

\While{$res > tol$}
    \State Construct block matrices:
    \State \quad $\mathsf{B}_k \gets [\mathsf{F}_m \, \mathsf{L}_k,\; \phi_m^{(\alpha,0)}]$
    \State \quad $\mathsf{C}_k \gets [\mathsf{L} \, \mathsf{R}_k,\; u_0]$
    
    \State Apply low-rank approximation:
    \State \quad $\hat{\mathsf{B}}_k \approx \mathsf{B}_k$
    \State \quad $\hat{\mathsf{C}}_k \approx \mathsf{C}_k$
    
    \State Perform Block Arnoldi:
    \State \quad $(\mathsf{V}_q,\, \mathsf{J}_q) \gets \text{BlockArnoldi}(\mathsf{K},\, \hat{\mathsf{C}}_k)$
    
    \State Solve Stein matrix equation:
    \State \quad $\mathsf{Y} - \mathsf{J}_q \mathsf{Y} (\mathsf{H}_m^\alpha)^\top = (\mathsf{V}_q^H \hat{\mathsf{C}}_k) \hat{\mathsf{B}}_k^\top$
    
    \State Update low-rank factors:
    \State \quad $\mathsf{L}_{k+1} \gets \mathsf{Y}^\top$
    \State \quad $\mathsf{R}_{k+1} \gets \mathsf{V}_q$
    
    \State Residual norm approximation:
    \State \quad $res \gets | (|\phi_m^{(\alpha,0)}|^\top \mathsf{L}_{k+1})(\mathsf{R}_{k+1}^\top u_0) - (|\phi_m^{(\alpha,0)}|^\top \mathsf{L}_{k})(\mathsf{R}_{k}^\top u_0)  | $
    \State $k \gets k + 1$
\EndWhile

\end{algorithmic}
\end{algo}

\bibliographystyle{spmpsci}
\bibliography{bibliografia}

\bigskip  %

\small %
\noindent
{\bf Publisher's Note}
Springer Nature remains neutral with regard to jurisdictional claims in published maps and institutional affiliations.

\end{document}